\newtheorem*{maincor-repeat}{Corollary \ref{c:st-1}}
\def\dfrac#1#2{\lower0.15ex\hbox{\large$\frac{#1}{#2}$}}
\def\eps{\varepsilon}
\def\Z{\mathbb{Z}}
\def\R{\mathbb{R}}
\def\C{\mathbb{C}}
\def\F{\mathbb{F}}
\def\E{\mathbb{E}}
\DeclareMathOperator\spa{span}
\DeclareMathOperator\rank{rank}
\DeclareMathOperator\Sym{Sym}
\DeclareMathOperator\im{Im}
\DeclareMathOperator\id{id}
\newcommand{\vect}[1]{\boldsymbol{#1}}
\DeclareMathOperator\Af{A4}
\DeclareMathOperator\Ai{A5}
\newtheorem{firstthm}{Proposition}[section]
\newtheorem{thm}[firstthm]{Theorem}
\newtheorem{prop}[firstthm]{Proposition}
\newtheorem{lemma}[firstthm]{Lemma}
\newtheorem{cor}[firstthm]{Corollary}
\newtheorem{conj}[firstthm]{Conjecture}
\newtheorem{ques}[firstthm]{Question}
\theoremstyle{definition}
\newtheorem{example}[firstthm]{Example}
\newtheorem{remark}[firstthm]{Remark} 
\newtheorem{obs}[firstthm]{Observation} 
\newtheorem{defn}[firstthm]{Definition}
\title{Local aspects of the Sidorenko property for linear equations}
\author{Daniel Altman}
\begin{document}

\maketitle

\begin{abstract}
A system of linear equations in $\F_p^n$ is \textit{Sidorenko} if any subset of $\F_p^n$ contains at least as many solutions to the system as a random set of the same density, asymptotically as $n\to \infty$. A system of linear equations is \textit{common} if any 2-colouring of $\F_p^n$ yields at least as many monochromatic solutions to the system of equations as a random 2-colouring, asymptotically as $n\to \infty$. Both classification problems remain wide open despite recent attention. 

We show that a certain generic family of systems of two linear equations is not Sidorenko. In fact, we show that systems in this family are not locally Sidorenko, and that systems in this family which do not contain additive tuples are not weakly locally Sidorenko. This endeavour answers a conjecture and question of Kam\v cev--Liebenau--Morrison. Insofar as methods, we observe that the true complexity of a linear system is not maintained under Fourier inversion; our main novelty is the use of higher-order methods in the frequency space of systems which have complexity one. We also give a shorter proof of the recent result of Kam\v cev--Liebenau--Morrison and independently Versteegen that any linear system containing a four term arithmetic progression is uncommon.
\end{abstract}

\section{Introduction}\label{s:intro}

\subsection{Context}
In graph theory, Sidorenko's conjecture \cite{S93} predicts that if $H$ is a bipartite graph, then among all graphs $G$ with $n$ vertices and fixed average degree, the number of copies of $H$ in $G$ is asymptotically (as $|V(G)|\to \infty$) minimised when $G$ is random. We will call graphs $H$ which satisfy this property \textit{Sidorenko}. Sidorenko's conjecture has been resolved for certain classes of bipartite graph (for example trees, cycles, complete bipartite graphs \cite{S91}, see also \cite{CFS10, H10, LS11, S14, KLL16, CL17, CKLL18, CL21}) though the conjecture in general remains open, and is indeed of great interest. There is a related colouring property which is also of interest: for which graphs $H$ is it true that, among all two-colourings of $K_n$, the number of monochromatic copies of $H$ is minimised by a random two-colouring (asymptotically as $n\to \infty$)?  Such graphs $H$ are called \textit{common}. It is not difficult to see that if a graph satisfies the Sidorenko property, then it is common. Goodman showed that $K_3$ is common \cite{G59}. Erd\H os \cite{E62} subsequently conjectured that $K_4$ is common and Burr--Rosta \cite{BR80} that every graph is common. Sidorenko \cite{Sid89} showed that a triangle with a pendant edge is not common and Thomason \cite{T89} showed that in fact $K_4$ is not common, together disproving both conjectures. In fact, any graph which contains $K_4$ is uncommon \cite{JST96}. It is not difficult to see that if $H$ is \textit{not} bipartite then it is not Sidorenko, and so we have a conjectural classification of all graphs. Even a reasonable conjectured classification of common graphs seems out of reach at the moment.

There has been recent interest in obtaining analogous conjectures and results in arithmetic contexts, initiated by Saad and Wolf \cite{SW17}. Which systems of linear forms $\Psi$ have the property that, among all subsets $S$ of $\F_p^n$ of fixed size, the number of solutions to $\Psi$ in $S$ is minimised when $S$ is chosen randomly? Such systems are called Sidorenko. Those systems with the property that random two-colourings minimise the number of monochromatic solutions among all two-colourings are called common. Again, Sidorenko implies common. In the arithmetic setting, neither property has a conjectured classification. 

One situation in which a classification is known is when the system of linear forms has codimension one (i.e. its image is described by a single equation). Here, an equation is Sidorenko if \cite{SW17} and only if \cite{FPZ21} its coefficients may be partitioned into pairs which sum to zero, and is common and not Sidorenko if and only if it has an odd number of variables. For higher codimension systems, one makes the trivial observation that the union of two Sidorenko systems on disjoint variable sets is Sidorenko. Kam\v cev, Liebenau and Morrison give a nontrivial method by which higher codimension Sidorenko systems can be built from Sidorenko equations; see \cite[Theorem 1.3]{KLMSid}. There are also a number of results in the negative direction \cite{KLMCom, KLMSid, V4ap, SW17}. Finally, we note that Kr\'al'--Lamaison--Pach have reported on a partial classification of common systems comprising two equations in $\F_2^n$ \cite{KLP22} and have since obtained a near-complete classification of these systems, which is due to appear in forthcoming work (personal communications).

Lov\'asz  \cite{L10} showed that the graph-theoretic Sidorenko conjecture is locally true: for all bipartite graphs $H$, there is no suitably small perturbation of the random graph $G$ which increases the number of copies of $H$ in $G$. He subsequently classified locally Sidorenko graphs: a graph $H$ is locally Sidorenko if and only if it is a forest or has even girth \cite[Theorem 16.26]{L12} (see also \cite{FW17} to this end). The study of local notions of commonness has also attracted recent interest: \cite{L12, CHL22, HKKV22}. One of the purposes of this article is to introduce these local notions to the arithmetic setting.   Finally, to further motivate the study of these local properties, we note that \cite{Alt22} uses a local Sidorenko property of a particular linear system as a key ingredient in its answer to a question of Alon.

\subsection{Results}
After Fox--Pham--Zhao \cite{FPZ21} completed the classification of codimension one systems, the next natural goal ought perhaps to be a classification of codimension two systems. Some results on codimension two systems have been obtained in \cite[Section 4]{KLMSid} and \cite{KLP22}. 

It is trivially the case that codimension one systems are Sidorenko/common if and only if they are locally Sidorenko/common; local notions are only interesting when the codimension is at least two. We study two different notions of locality. Loosely speaking, the weaker notion (`weakly locally Sidorenko/common') allows the radius of perturbation to depend on the function by which we perturb, and the stronger notion (`locally Sidorenko/common') asks that the radius is uniform in the choice of function. 

One of our main theorems shows that a certain generic family of systems of codimension 2 is not Sidorenko, and (nearly) classifies this family by whether they are weakly locally Sidorenko or locally Sidorenko. Let $\Psi=(\psi_1,\ldots,\psi_t)$ be a system of linear forms, each mapping $\F_p^D$ to $\F_p$, whose image is determined by a $2 \times t$ system of linear equations with coefficients 
 \[M:=\begin{pmatrix}
  b_{11} & b_{12} & \cdots & b_{1t} \\ 
  b_{21} & b_{22} & \cdots & b_{2t}
\end{pmatrix}.\]
We will call $\Psi$ \textit{linearly generic} if every $2 \times 2$ minor of $M$ has nonzero determinant. An \textit{additive $k$-tuple} is a tuple $(x_1, x_2, \ldots, x_k)$ satisfying the equation $x_1-x_2+x_3-x_4 + \cdots + (-1)^{k+1}x_k=0$. We direct the reader to Section 2 and in particular Definition \ref{d:locally-sid} and Definition \ref{d:weakly-sid} for formal definitions of the weak local Sidorenko property and the local Sidorenko property. We note that Sidorenko implies locally Sidorenko, which in turn implies weakly locally Sidorenko.

\begin{thm}\label{t:classification}
Let $\Psi$ be a rank two system of linear equations in $t$ variables which is linearly generic. Then $\Psi$ is not locally Sidorenko (and so not Sidorenko). Furthermore, if $\Psi$ is weakly locally Sidorenko then $t$ is odd and if furthermore $p$ is sufficiently large in terms of $t$ then $\Psi$ contains an additive $(t-1)$-tuple. 
\end{thm}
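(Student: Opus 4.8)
The plan is to work in Fourier space, exploiting the fact that a rank-two linearly generic system can be parametrised so that its solution count is an average of $\widehat{f}$ evaluated at frequencies lying on a low-dimensional variety cut out by the $2\times t$ coefficient matrix $M$. The first step is to set up the counting operator: writing $f = \delta + g$ for the perturbation $g$ with $\E g = 0$, expand the number of solutions to $\Psi$ in $f$ as $\delta^t$ plus lower-order terms in $g$. Because $\Psi$ has complexity one, the leading ``bad'' term one would hope to control is the one of order $g^2$, and one computes that this quadratic term is a sum over the $1$-dimensional families of frequency pairs $(r, -r)$ --- weighted by coefficients determined by the $2\times 2$ minors of $M$ --- of $|\widehat{g}(\phi_1(r))|^2$-type quantities with phase twists coming from the other forms. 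The key novelty, flagged in the abstract, is that even though $\Psi$ has true complexity one, after Fourier inversion these frequency-space expressions behave like higher-complexity objects, so one should not expect them to be sign-definite; this is precisely what opens the door to a perturbation decreasing the count.

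The second step is to construct an explicit perturbation $g$ witnessing the failure of the local Sidorenko property. I would take $g$ to be (a bounded version of) a single nonlinear character or a quadratic exponential $x \mapsto \eps \cdot e_p(Q(x))$ for a carefully chosen quadratic form $Q$ on $\F_p^n$, chosen so that the leading perturbative term computed in step one is strictly negative. The linear genericity hypothesis (every $2\times 2$ minor nonsingular) is what guarantees the relevant frequency maps $\phi_i$ are ``independent enough'' that one cannot have cancellation forced for structural reasons; it lets one isolate the sign of the dominant term by choosing $Q$ so that the phases $Q \circ \psi_i$ interact the right way. One then checks that for $\eps$ small but \emph{fixed independent of $n$}, the solution count strictly drops, which is exactly the strong (non-weak) local statement. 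The main obstacle I anticipate is bookkeeping the $2\times 2$-minor coefficients so as to verify that the negative term genuinely dominates: one has to rule out that the quadratic-in-$g$ term vanishes identically (in which case one would pass to the cubic term, changing parity considerations) and ensure the chosen $Q$ does not accidentally make it nonnegative.

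For the second assertion --- the parity and additive-tuple conclusions under the weak local hypothesis --- the plan is contrapositive and more delicate. Assuming $\Psi$ \emph{is} weakly locally Sidorenko, the quadratic-in-$g$ term from step one must be nonnegative for \emph{all} mean-zero $g$ (now allowing the radius to shrink with $g$, which still forces nonnegativity of the leading term). Running this over $g$ ranging through linear characters recovers, via an argument in the spirit of the codimension-one classification of Fox--Pham--Zhao, that the frequency coefficients must pair up with opposite signs, and a counting of the variables on each such family forces $t$ to be odd. To upgrade to ``contains an additive $(t-1)$-tuple'' when $p$ is large, I would feed in perturbations $g$ supported near a generic point and use higher-order (quadratic) characters: the nonnegativity of the resulting expression, combined with linear genericity, pins down that $t-1$ of the forms $\psi_i$ must satisfy the alternating relation $\psi_{i_1} - \psi_{i_2} + \cdots = 0$ up to scalars --- i.e. that $\Psi$ sees an additive $(t-1)$-tuple. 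The largeness of $p$ enters to prevent small-characteristic degeneracies where distinct frequency families collide modulo $p$; the hard part here is extracting the combinatorial additive-tuple statement from the analytic nonnegativity, and I expect this to require a careful choice of test functions $g$ adapted to each candidate $(t-1)$-subset of the forms.
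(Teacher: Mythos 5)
Your proposal misidentifies where the perturbative action happens, and this derails both halves of the argument. For a linearly generic rank-two system, every subsystem $\Psi(S)$ with $1\le |S|\le t-2$ has image containing a coordinate hyperplane, so $T_{\Psi(S)}(g)=0$ for all mean-zero $g$: the quadratic-in-$g$ term you plan to analyse (and, in the second part, to force to be nonnegative) vanishes identically and carries no information. The genuine expansion is $T_\Psi(\alpha+\eps g)=\alpha^t+\alpha\eps^{t-1}\sum_{|S|=t-1}T_{\Psi(S)}(g)+\eps^t T_\Psi(g)$, and the whole theorem is about the competition between the $(t-1)$-level term and the top term. This is also why your plan for the strong (non-weak) local statement cannot work as stated: you propose a fixed $n$ and a fixed small $\eps$ producing a drop, but that would disprove \emph{weak} local Sidorenko as well, contradicting the fact that some linearly generic systems (those containing an additive $(t-1)$-tuple, e.g.\ $x_1-x_2+x_3-x_4=0$, $x_1+2x_2-x_3-2x_5=0$) \emph{are} weakly locally Sidorenko. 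Indeed, when some $\Psi(S)$ is an additive $(t-1)$-tuple, $T_{\Psi(S)}(g)=\sum_h|\hat g(h)|^{t-1}$ is a positive-definite norm, so at fixed $n$ the $\eps^{t-1}$ term dominates for all small $\eps$ and no perturbation helps. The only way to beat it is to let $n\to\infty$ \emph{after} fixing $\eps$, exhibiting $f=f_n$ with $T_{\Phi}(f)$ (the $\ell^{t-1}$ norm term) decaying strictly faster than $|T_\Psi(f)|$; the paper achieves this with a quadratic construction in \emph{frequency} space, taking $\hat f$ to be essentially the balanced indicator of the quadric $\{h:h^Th=0\}$ and computing both rates via Gauss sums. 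Your suggested quadratic phase $e_p(Q(x))$ in physical space does the opposite of what is needed: since these systems have complexity one, all the relevant counts of a quadratic phase are Gauss-sum small, and you get no control of the sign competition between the two surviving terms.

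For the second assertion the paper's route is also quite different from yours and more concrete. Parity: if $t$ is even then $t-1$ is odd, so one simply picks $g$ with $\sum_{|S|=t-1}T_{\Psi(S)}(g)\ne 0$ and flips its sign if necessary; hence weakly locally Sidorenko forces $t$ odd. Additive tuples: assuming no additive $(t-1)$-tuple (and $p$ large in terms of $t$), one constructs for each $(t-1)$-subset $S$ a mean-zero $f_S$ with $T_{\Psi(S)}(f_S)=0$ but $T_\Psi(f_S)\ne 0$ — done on the Fourier side, placing mass on finitely many frequencies chosen using linear genericity, with a separate argument for equations all of whose coefficients are $\pm1$ but which are not additive tuples — and then tensors them: $f=\bigotimes_S f_S$ kills every $(t-1)$-level term by multiplicativity of $T$ under tensor products while keeping $T_\Psi(f)=\prod_S T_\Psi(f_S)\ne 0$, and the sign can be fixed because $t$ is odd. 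Your contrapositive scheme (nonnegativity over linear characters in the spirit of Fox--Pham--Zhao, then quadratic characters to extract the additive tuple) is aimed at a term that is identically zero, and even redirected to the $(t-1)$-level term it gives no mechanism for producing the tensor-type test functions that make the conclusion effective; so as written there is a genuine gap in both the construction and the extraction of the additive-tuple statement.
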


We have not classified systems with $t$ odd and which contain an additive $(t-1)$-tuple by whether they are weakly locally Sidorenko, but remark that there does exist such a system which is weakly  locally Sidorenko (see Example \ref{ex:weak-not-local}).

As a consequence of Theorem \ref{t:classification}, we answer the following conjecture and question of Kam\v cev--Liebenau--Morrison. In the following conjecture (respectively, question), the condition that $s(\Psi)=4$ (resp. $s(\Psi)=t-1$)  asks that the projection of the image of $\Psi$ onto any 3 (resp. $t-2$) coordinates contains a coordinate hyperplane; for the meantime it can be thought of as a technical condition which implies that $\Psi$ is linearly generic.

\begin{conj}[{\cite[Conjecture 5.1]{KLMSid}}]\label{c:KLM5sid}
Let $\Psi$ be a system of $5$ linear forms whose image has codimension 2. If $s(\Psi) = 4$ then $\Psi$ is not Sidorenko. 
\end{conj}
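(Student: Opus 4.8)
The plan is to derive Conjecture \ref{c:KLM5sid} directly from Theorem \ref{t:classification}. The key reduction is to observe that the hypotheses of the conjecture imply those of the theorem. Concretely, $\Psi$ is a system of $5$ linear forms whose image has codimension $2$; this means precisely that its image is cut out by a rank two system of linear equations in $t=5$ variables. So the only thing to verify is that the condition $s(\Psi)=4$ forces $\Psi$ to be linearly generic in the sense of the paper, i.e.\ every $2\times 2$ minor of the coefficient matrix $M$ is nonsingular.

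First I would unpack the quantity $s(\Psi)$. The condition $s(\Psi)=4$ is the statement that projecting the image of $\Psi$ (a codimension-two subspace of $\F_p^5$) onto any $3$ of the $5$ coordinates yields a set containing a coordinate hyperplane; equivalently, for any choice of $3$ coordinates, the restriction of the image to those coordinates is not all of $\F_p^3$, but its projection still covers enough — the precise combinatorial translation is that no nontrivial linear relation among the equation-rows is supported on fewer than $4$ columns. I would state this as a lemma: $s(\Psi)=4$ if and only if every nonzero vector in the row space of $M$ has support of size at least $4$ (among the $t=5$ columns). Granting that, suppose some $2\times 2$ minor of $M$, say on columns $i,j$, vanishes. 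Then the two rows of $M$, restricted to columns $\{i,j\}$, are linearly dependent, so some nontrivial linear combination of the rows vanishes on both columns $i$ and $j$; that combination is a nonzero row-space vector supported on at most $t-2=3$ columns, contradicting $s(\Psi)\ge 4$. Hence $\Psi$ is linearly generic.

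With linear genericity established, Theorem \ref{t:classification} applies verbatim with $t=5$: the system is not locally Sidorenko and in particular not Sidorenko, which is exactly the assertion of the conjecture. (One could even extract more: since $t=5$ is odd, the ``furthermore'' clause of Theorem \ref{t:classification} is not vacuous, and the theorem shows that if such a $\Psi$ were weakly locally Sidorenko it would need to contain an additive $4$-tuple once $p$ is large.) I would present the argument as a short corollary proof: recall the definition of $s$, state and prove the support-size lemma, deduce linear genericity, and invoke Theorem \ref{t:classification}.

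The only real obstacle I anticipate is the bookkeeping around the definition of $s(\Psi)$ and making sure the translation between ``projection onto $3$ coordinates contains a coordinate hyperplane'' and ``row-space vectors have support $\ge 4$'' is correct, including edge cases such as rows that are already supported on few columns or degenerate configurations where a variable does not appear. This is purely a linear-algebra unwinding of a definition from \cite{KLMSid} and should be routine, but it is where care is needed; everything downstream is an immediate citation of Theorem \ref{t:classification}.
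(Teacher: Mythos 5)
Your proposal is correct and follows essentially the same route as the paper: the paper's proof of Corollary \ref{c:st-1} consists precisely of recalling the meaning of $s(\Psi)$, noting that $s(\Psi)=t-1$ forces linear genericity (left there as ``an easy exercise in linear algebra''), and invoking Theorem \ref{t:classification}. Your row-space/support-size argument is exactly that exercise spelled out (and the only caveat, the degenerate weight-one case corresponding to an identically zero form, is excluded by the standing assumptions on the forms, as you anticipate).
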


\begin{ques}[{\cite[Question 5.3]{KLMSid}}]\label{q:KLMkSid}
Let $t\geq 7$ be odd. Does there exists a system of $t$ linear forms $\Psi$ whose image has codimension 2 and with $s(\Psi)=t-1$ which is Sidorenko? 
\end{ques}

We have the following corollary of Theorem \ref{t:classification}.

\begin{cor}\label{c:st-1}
Conjecture \ref{c:KLM5sid} is true and the answer to Question \ref{q:KLMkSid} is no. 
\end{cor}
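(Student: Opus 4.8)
The plan is to deduce both assertions directly from Theorem~\ref{t:classification}. The first point to record is that a system of $t$ linear forms whose image has codimension two is exactly a rank two system of linear equations in $t$ variables in the sense of Theorem~\ref{t:classification}: if $\im\Psi=\ker M$ for the defining $2\times t$ coefficient matrix $M$, then the image has codimension two precisely when $\rank M=2$. Consequently it is enough to show that the hypothesis $s(\Psi)=4$ (in the case $t=5$), and more generally $s(\Psi)=t-1$, forces $\Psi$ to be linearly generic; Theorem~\ref{t:classification} then immediately yields that $\Psi$ is not locally Sidorenko, and in particular not Sidorenko, which is precisely what the conjecture and the question are about.

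For the genericity step I would argue at the level of the matrix $M$ (this is the implication already noted in the discussion preceding Corollary~\ref{c:st-1}). Suppose for contradiction that two columns $c_a,c_b$ of $M$ are linearly dependent, and set $S=[t]\setminus\{a,b\}$, so that $|S|=t-2$. Dependence of $c_a,c_b$ means $\ker M\cap\F_p^{\{a,b\}}$ has dimension at least one, so the projection of $\ker M$ onto the coordinates in $S$ has dimension at most $t-3$. A subspace of $\F_p^{S}$ of dimension at most $t-3$ contains a coordinate hyperplane only if it equals some coordinate hyperplane $\{v_k=0\}$ with $k\in S$, and this forces $e_k$ to lie in the row space of $M$, equivalently forces the form $\psi_k$ to vanish identically. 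Since the forms of $\Psi$ are nonzero, there is no such $k$, and this projection onto $S$ contains no coordinate hyperplane. This contradicts $s(\Psi)=t-1$, which asks precisely that the projection onto every $t-2$ of the coordinates contains a coordinate hyperplane. Hence no two columns of $M$ are dependent, i.e.\ every $2\times 2$ minor of $M$ is nonsingular and $\Psi$ is linearly generic.

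It remains to put the pieces together. For Conjecture~\ref{c:KLM5sid}: a system of five linear forms whose image has codimension two and satisfies $s(\Psi)=4$ is a linearly generic rank two system, so by Theorem~\ref{t:classification} it is not locally Sidorenko, and hence not Sidorenko; thus the conjecture holds. For Question~\ref{q:KLMkSid}: any system of $t\ge 7$ (odd) linear forms whose image has codimension two and satisfies $s(\Psi)=t-1$ is likewise a linearly generic rank two system, so again Theorem~\ref{t:classification} shows it is not Sidorenko; therefore no Sidorenko system of the prescribed shape exists, and the answer to the question is negative. Note that only the first conclusion of Theorem~\ref{t:classification} is invoked: the conditions there on the parity of $t$ and on additive $(t-1)$-tuples are necessary conditions for the \emph{weak} local Sidorenko property, and, with $t$ already odd in the setting of the question, they contribute nothing here.

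The deduction is short, and there is no substantive obstacle beyond Theorem~\ref{t:classification} itself: once the translation above is in hand the corollary is immediate. The only point requiring a little care is the second paragraph, where the hypothesis on $s(\Psi)$ is converted into linear genericity of $M$; this is exactly where one uses that $\Psi$ contains no trivial form, in order to rule out the degenerate configurations of the columns of $M$.
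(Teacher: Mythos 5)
Your proof is correct and takes essentially the same approach as the paper: the paper's proof simply observes that $s(\Psi)=t-1$ forces linear genericity (left there as an ``easy exercise in linear algebra'') and then applies Theorem \ref{t:classification}. Your second paragraph just fills in that exercise, and does so correctly, with the nonzero-forms assumption playing exactly the role you indicate in ruling out a coordinate hyperplane arising from a vanishing form.
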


Define the \textit{complexity} of $\Psi=(\psi_1,\ldots,\psi_t)$, where each $\psi_i$ is a linear map from $\F_p^D$ to $\F_p$, to be the smallest positive integer $s$ such that the functions $(\psi_i^s)_{i=1}^t$ are linearly independent over $\F_p$. Loosely speaking, if the complexity of a system is 1, then its analysis should be controlled by Fourier analysis, and one ought not require the higher-order theory. We direct the reader to \cite{GW10}, \cite{GW11}, \cite{GW11GAFA} for relevant definitions, discussions and results to this end. One may easily compute that for $t\geq 5$, systems whose images have codimension two generically have complexity one.  It is not difficult, using higher-order constructions, to contrive complexity $\geq 2$ systems which are weakly locally Sidorenko but not Sidorenko. The situation is less clear for complexity one systems. We believe therefore that the following corollary of Theorem \ref{t:classification} is also worth noting. 

\begin{cor}\label{cor:weak-not-local}
Among complexity one systems, the weak local Sidorenko property is not equivalent to the local Sidorenko property.
\end{cor}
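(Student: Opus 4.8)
\textbf{Proof plan for Corollary \ref{cor:weak-not-local}.}
The strategy is to exhibit, within the linearly generic rank-two family covered by Theorem \ref{t:classification}, a single system $\Psi$ of complexity one which separates the two local notions: it is weakly locally Sidorenko but not locally Sidorenko. Theorem \ref{t:classification} hands us half of this for free. Indeed, fix $t$ odd, $t\ge 7$ (so that, by the discussion preceding the corollary, a generic codimension-two system in $t$ variables has complexity one), and take $\Psi$ to be the system of Example \ref{ex:weak-not-local}, which is linearly generic, contains an additive $(t-1)$-tuple, and is weakly locally Sidorenko. The first step is therefore to verify that this particular $\Psi$ does have complexity one: one checks directly that the squares $(\psi_i^2)_{i=1}^t$ are linearly dependent over $\F_p$ (equivalently, that the system is not of true complexity $\ge 2$), which is a short computation with the explicit coefficient matrix of the example, and is in any case guaranteed by genericity of the defining $2\times t$ matrix for $t\ge 5$.

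The second step is to invoke Theorem \ref{t:classification} in the direction that matters here: since $\Psi$ is linearly generic and rank two, it is \emph{not} locally Sidorenko (and indeed not Sidorenko). Combined with the defining property of Example \ref{ex:weak-not-local} that $\Psi$ \emph{is} weakly locally Sidorenko, we obtain a complexity-one system which lies strictly between the two properties, which is exactly the assertion of the corollary. No further argument is needed beyond assembling these facts, together with the chain of implications (Sidorenko $\Rightarrow$ locally Sidorenko $\Rightarrow$ weakly locally Sidorenko) already recorded in the excerpt, which shows the two properties are genuinely ordered and that exhibiting one such $\Psi$ suffices.

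The only genuine content — and hence the main obstacle — is establishing that Example \ref{ex:weak-not-local} really is weakly locally Sidorenko; but this is the business of that example itself rather than of the corollary, so for the purposes of this proof it may be cited. What remains to be careful about is bookkeeping: ensuring that the chosen $t$ (and the chosen $p$, taken large enough in terms of $t$ so that the additive-tuple conclusion of Theorem \ref{t:classification} is consistent and the complexity count behaves) simultaneously makes the example's system complexity one, linearly generic, rank two, and not Sidorenko. Since all of these are open generic conditions on the coefficient matrix over a sufficiently large field, a single explicit choice works, and writing it down completes the proof.
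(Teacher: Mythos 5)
Your proposal is essentially the paper's own proof: the corollary is established exactly by citing Example \ref{ex:weak-not-local} (shown there to be weakly locally Sidorenko) together with Theorem \ref{t:classification} (it is linearly generic of rank two, hence not locally Sidorenko). Two small slips to correct: that example has $t=5$, not $t\ge 7$ (and $t\ge 5$ is all the genericity/complexity-one count requires), and complexity one means the squares $(\psi_i^2)_{i=1}^t$ are linearly \emph{independent}, not dependent.
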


The explicit example demonstrating Corollary \ref{cor:weak-not-local} is Example \ref{ex:weak-not-local} in Section \ref{s:examples}.

Perhaps of independent interest is the observation and method underlying Theorem \ref{t:classification} and its corollaries. We observe that although these systems have complexity one, their `dual' system (i.e. the linear system obtained in frequency space after Fourier inversion) need not have complexity one. The most involved part of Theorem \ref{t:classification} utilises higher-order constructions in the frequency space.

We also provide the first example of a system which is locally Sidorenko but not Sidorenko; see Example \ref{ex:sid-not-local}. (This is the same example as in \cite{Alt22}, though there we prove a local Sidorenko property with respect to the $\ell^\infty$ norm on the Fourier side.)

\begin{prop}\label{p:local-not-sid}
In the arithmetic setting, the Sidorenko property is not equivalent to the local Sidorenko property.
\end{prop}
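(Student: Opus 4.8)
The plan is to exhibit an explicit codimension-two system $\Psi$, together with an explicit perturbation $1_S = \rho + g$ of the indicator of a set of density $\rho$, which demonstrates simultaneously that $\Psi$ is locally Sidorenko and that $\Psi$ is not Sidorenko. For the non-Sidorenko direction the strategy is the one that is by now standard in the subject (cf. \cite{SW17, FPZ21}): after opening up the count of solutions to $\Psi$ in a function $f$ via Fourier inversion, one obtains a multilinear expression in the Fourier coefficients $\widehat{f}(\xi)$ indexed by frequency tuples $\xi$ lying in the dual code of $\Psi$; one then chooses $f = \rho + \eps\phi$ for a carefully designed real function $\phi$ supported on a small number of frequencies so that the resulting polynomial in $\eps$ has a negative coefficient at some order, and then pushes $\rho \to 0$ so that this lower-order term dominates. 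Concretely I would reuse the example from \cite{Alt22} referenced in the statement, so that the required computation has essentially already been carried out there; the only new point is to observe that the same example can be analysed locally.

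For the \emph{local} Sidorenko direction, the key is Definition~\ref{d:locally-sid}: one must show there is a radius $\delta > 0$, uniform over all perturbing functions $g : \F_p^n \to \R$ with $\|g\|_\infty \le \delta$ and $\rho + g$ taking values in $[0,1]$, such that the solution count of $\Psi$ in $\rho + g$ is at least that in the constant function $\rho$. The natural approach is to expand the solution count $T(\rho + g)$ as a polynomial in $g$: the degree-zero term is the random count $\rho^t$ (up to normalisation), the linear term vanishes by a standard averaging/mean-zero argument once we are free to perturb $\rho$ slightly, and so the sign of $T(\rho+g) - \rho^t$ is governed by the quadratic term for $\|g\|_\infty$ small. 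Thus it suffices to show that the quadratic form $g \mapsto Q(g)$ appearing at second order is positive semidefinite on the relevant subspace, with the higher-order terms controlled by $\|g\|_\infty \cdot \|g\|_2^2$ so that they cannot overturn the sign of $Q$ once $\delta$ is small enough; this is exactly the mechanism by which codimension-one Sidorenko systems, and Lov\'asz's locally Sidorenko graphs, are handled. For the particular example I would compute $Q(g)$ in terms of $\widehat{g}$ and check by hand that it is a nonnegative combination of squared Fourier coefficients — this is where the specific arithmetic structure of the chosen $\Psi$ (the fact that its coefficients pair up in a suitable way on the frequency side, even though the global Sidorenko obstruction lives at a higher order in $\rho$) is used.

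The main obstacle, and the point requiring genuine care, is the interplay between the two limits: the local Sidorenko property requires the perturbation radius $\delta$ to be uniform in $g$ but is allowed to depend on $\rho$, whereas the failure of Sidorenko is witnessed only in the limit $\rho \to 0$. So I must be careful that the quadratic form $Q = Q_\rho$ stays uniformly positive (bounded below by a positive multiple of $\|g\|_2^2$) for each \emph{fixed} $\rho > 0$, while the \emph{global} violation exploits a term of order $\rho^{k}\eps^{k}$ with $k < t$ that only beats the main term when $\eps$ is not small relative to $\rho$ — i.e. the violating perturbation has $\|g\|_\infty$ comparable to $\rho$ rather than infinitesimal. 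Making these quantifiers consistent, and in particular verifying that the sub-quadratic error terms in the local expansion really are $O_\rho(\|g\|_\infty \|g\|_2^2)$ with a constant that may blow up as $\rho \to 0$ but is finite for fixed $\rho$, is the crux. Once that bookkeeping is in place, the proposition follows by combining the local positivity of $Q_\rho$ (for every $\rho$) with the global computation of \cite{Alt22} (for $\rho$ small), so that $\Psi$ is locally Sidorenko but not Sidorenko, giving Proposition~\ref{p:local-not-sid}.
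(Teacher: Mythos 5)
Your non-Sidorenko half is fine in outline, though much heavier than necessary: for the example the paper uses (the nine-variable system $x_1-x_2+x_3-x_4=0$, $x_5-x_6+x_7-x_8+x_9=0$ on disjoint variable sets, the same system as in \cite{Alt22}) one does not need any Fourier perturbation at all, since $A=\{1\}\subset\F_p$ has density $1/p$ and contains no solutions whatsoever (because $1-1+1-1+1\neq 0$). The genuine gap is in the local half. You propose to show that the second-order form $Q(g)$ in the expansion of $T_\Psi(\alpha+g)$ is positive semidefinite, indeed bounded below by $c_\alpha\|g\|_2^2$, and then to absorb all higher-order terms via a bound of the form $O_\alpha(\|g\|_\infty\|g\|_2^2)$. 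For this example (and for any codimension-two system in which every two-form subsystem projects onto a coordinate hyperplane) the quadratic term vanishes identically on mean-zero perturbations: writing $g=\eps f$ with $\E f=0$, multiplicativity over the disjoint variable sets gives $T_\Phi(\alpha+\eps f)=\alpha^9+\alpha^5\eps^4T_{\Af}(f)+\alpha^4\eps^5T_{\Ai}(f)+\eps^9T_{\Af}(f)T_{\Ai}(f)$, so the first term beyond $\alpha^9$ is quartic in the perturbation. Hence positive semidefiniteness of $Q$ carries no information, and no coercive lower bound of the form $c_\alpha\|g\|_2^2$ holds at any order: the quartic term is $\alpha^5\eps^4\sum_h|\hat f(h)|^4$, and $\sum_h|\hat f(h)|^4$ can be as small as $p^{-n}$ while $\E|f|^2\asymp 1$ (spread Fourier mass), so an error allowance of size $\delta\|g\|_\infty\|g\|_2^2$ can swamp the main term for every fixed $\delta>0$ once $n$ is large. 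As written, your local argument therefore fails exactly at the step you flag as the crux.

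The mechanism that actually works, and that the paper uses, is to compare the dangerous negative terms against the lowest-order coercive term itself, measured in the natural Fourier norm rather than in $\|g\|_2$: one has $T_{\Af}(f)=\sum_h|\hat f(h)|^4\geq 0$ and, by H\"older together with $\|\hat f\|_\infty\leq 1$, $|T_{\Ai}(f)|=\bigl|\sum_h|\hat f(h)|^4\hat f(h)\bigr|\leq T_{\Af}(f)$, whence $T_\Phi(\alpha+\eps f)\geq\alpha^9+\eps^4T_{\Af}(f)\left(\alpha^5-\alpha^4\eps-\eps^5\right)$, and any $\eps\leq\alpha/2$ works uniformly in $n$ and $f$, giving the local Sidorenko property with a radius depending only on $\alpha$. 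So to repair your proposal you should replace the ``quadratic form dominates cubic errors'' framework (appropriate for codimension one and for Lov\'asz's graph result) by ``the smallest nonvanishing subsystem dominates all larger ones,'' with the domination proved on the frequency side against $\sum_h|\hat f(h)|^4$ rather than against $\|g\|_2^2$.
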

This result is perhaps slightly less trivial than it sounds; for example in the graph-theoretic setting, it is not known whether local commonness is equivalent to commonness (see \cite{CHL22}). One of the reasons we have chosen to study these local properties is that all existing proofs in the literature that systems are not Sidorenko \cite{KLMSid, V4ap, SW17, FPZ21} prove the stronger statement that in fact these systems are not locally Sidorenko. Finally, in the following proposition, we also note that the linear genericity condition in Theorem \ref{t:classification} is necessary. 

\begin{prop}\label{p:not-generalise}
There exists a codimension two system which is locally Sidorenko, not linearly generic and which does not contain an additive tuple.
\end{prop}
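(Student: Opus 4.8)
The plan is to exhibit an explicit codimension two system, verify that it fails linear genericity and contains no additive tuple, and then prove that it is locally Sidorenko by a direct Fourier computation. A natural candidate is a system built as a "union" of two Sidorenko equations sharing no variables, perturbed so as to destroy linear genericity without creating an additive tuple: for instance take the system whose image is cut out by
\[
M = \begin{pmatrix} 1 & -1 & 1 & -1 & 0 & 0 \\ 0 & 0 & 0 & 0 & 1 & -1 \end{pmatrix},
\]
or a variant with a few more variables so that the second block is itself a Sidorenko equation of the form $x-y=0$ on a disjoint variable set. Here the $2\times 2$ minor formed from columns $5$ and $6$ of the first row vanishes, so the system is not linearly generic; and since each defining equation has its coefficients paired into pairs summing to zero (the first is a $4$-term additive relation, the second is $x=y$), there is no index set on which the restriction of the image is an additive $(k)$-tuple equation — this needs a short check against the definition of "contains an additive tuple," essentially because the support structure of $M$ does not let one read off a single alternating equation $x_1 - x_2 + \cdots = 0$ from the row space on any coordinate subset.

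Next I would establish the local Sidorenko property. The point is that the image of $\Psi$ is (after a change of variables) a product of the solution set of a $4$-term additive equation and the solution set of $x=y$, on disjoint variables. Both of these equations are Sidorenko in the strong (global) sense by Sidorenko–Wolf \cite{SW17}, since their coefficients pair up to zero. The solution count for $\Psi$ in a set $S$ of density $\delta$ therefore factors, in the appropriate Fourier expansion, as a product of the corresponding expressions for the two sub-equations — or more precisely, writing $f = 1_S$ and expanding over characters, the weight attached to each frequency vector in the kernel of $M^\top$ is a product of nonnegative contributions coming from the two blocks. Since a product of Sidorenko equations on disjoint variable sets is Sidorenko (the trivial observation recalled in the introduction), and Sidorenko implies locally Sidorenko, $\Psi$ is locally Sidorenko. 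To keep the argument self-contained I would instead verify directly that the relevant multilinear form is nonnegative by grouping Fourier terms exactly as in the standard proof that $x_1 - x_2 + x_3 - x_4 = 0$ is Sidorenko, noting that the extra block only multiplies everything by $\sum_r \hat f(r)^2 \ge 0$.

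The main obstacle, and the only place requiring genuine care, is the bookkeeping around the definition of "contains an additive tuple": one must confirm that the chosen $M$ genuinely has no coordinate subset on which the induced linear relation is a single alternating-sum equation, rather than something merely equivalent to one after further projection. I expect this to come down to a finite case analysis over subsets of the six columns, using that the two rows of $M$ have disjoint supports; the disjoint-support structure is precisely what both kills linear genericity and prevents an additive tuple from appearing, which is morally why the example works. Everything else — the Fourier positivity and the invocation that products of Sidorenko systems are Sidorenko — is routine and parallels computations already present in \cite{SW17} and in the preliminaries of this paper.
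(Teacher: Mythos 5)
Your example does not satisfy the hypotheses of the proposition: it manifestly contains additive tuples. With $M = \begin{pmatrix} 1 & -1 & 1 & -1 & 0 & 0 \\ 0 & 0 & 0 & 0 & 1 & -1 \end{pmatrix}$, the first row says precisely that the subsystem $(\psi_1,\psi_2,\psi_3,\psi_4)$ satisfies $x_1-x_2+x_3-x_4=0$, i.e.\ it is an additive $4$-tuple in the sense of the paper (and the second row is the additive $2$-tuple $x_5=x_6$). In the paper, ``$\Psi$ contains an additive tuple'' means exactly that some subsystem $\Psi(S)$ has its image cut out by an alternating $\pm 1$ equation, which is how the notion is used in the proof of Theorem \ref{t:weakSid}; your hedge that ``the support structure of $M$ does not let one read off a single alternating equation from the row space on any coordinate subset'' is false, since the first row itself is such an equation supported on $\{1,2,3,4\}$. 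So the example fails the ``no additive tuple'' requirement outright, and no bookkeeping over column subsets can rescue it. (The rest of your argument — non-genericity from the vanishing minors, and local Sidorenko via ``disjoint union of Sidorenko equations is Sidorenko, and Sidorenko implies locally Sidorenko'' — is fine as far as it goes, but it is applied to the wrong system.)

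The underlying confusion is between ``Sidorenko equation'' (coefficients pair off to sum to zero) and ``additive tuple'' (coefficients are alternating $\pm 1$): the former class is strictly larger, and that is the room the proposition lives in. Your disjoint-union strategy can likely be repaired by taking blocks that are Sidorenko but not additive tuples, e.g.\ $x_1-x_2+2x_3-2x_4=0$ and $x_5-x_6+2x_7-2x_8=0$ on disjoint variables (for $p>3$ one checks that no linear combination of the two rows has all nonzero entries of equal absolute value, so no subsystem is an additive tuple), which is simpler than the paper's route but only yields a Sidorenko, hence locally Sidorenko, example. The paper instead uses Example \ref{ex:no-add-tup}, a system with overlapping supports and coefficients $\pm 1,\pm 2$ ($x_1-x_2+2x_3-2x_4+x_5+x_6+x_7+x_8=0$, $x_5-x_6+2x_7-2x_8=0$), and proves local Sidorenko by a direct Fourier computation; that example is \emph{not} Sidorenko, which is stronger and also furnishes another witness for Proposition \ref{p:local-not-sid}. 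As written, though, your proposal does not prove the proposition.
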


The proof of Proposition \ref{p:not-generalise} is conducted as Example \ref{ex:no-add-tup}. 

In another direction, it has been known for some time \cite[Theorem 12]{JST96} that any graph containing $K_4$ is uncommon. Saad and Wolf \cite{SW17} asked whether the same is true for four-term arithmetic progressions in the arithmetic context. This question was answered in the affirmative recently by Kam\v cev--Liebenau--Morrison \cite{KLMCom} and independently by Versteegen \cite{V4ap}. Both proofs build on a `quadratic' construction of Gowers \cite{G20}. In Section \ref{s:local-4ap}, we give (together with a lemma from \cite{KLMCom}) a short proof of Saad and Wolf's conjecture which does not rely on higher-order constructions. Our proof is also more amenable to generalisation to rank two systems in $2k$ variables, which would be a significant step in the classification of weakly locally common systems\footnote{We do not consider weak local commonness in this document and so we do not give this concept a formal definition here. However, we hope that the enthusiastic reader may formulate a definition from that of the weak local Sidorenko property Definition \ref{d:weakly-sid} and the common property Definition \ref{d:common}.}. 

We set up some notational and technical preliminaries in Section \ref{s:prelim}. We prove Theorem \ref{t:classification} in Section \ref{s:generic-rank2}. In Section \ref{s:examples} we give examples to prove Corollary \ref{cor:weak-not-local}, Proposition \ref{p:local-not-sid} and Proposition \ref{p:not-generalise}. Finally in Section \ref{s:local-4ap} we give a new proof of the answer to Saad and Wolf's question \cite[Question 3.6]{SW17}.
\newline\newline
\textbf{Acknowledgments.} The author thanks Ben Green for drawing his attention to the study of the local Sidorenko property in the graph-theoretic setting (in particular \cite{L10}), and for valuable feedback on an earlier version of this document.

\section{Preliminaries}\label{s:prelim}

\subsection{Definitions, basic properties}\label{ss:defn}
Throughout we let $\Psi = (\psi_1,\ldots, \psi_t)$ be a system of $\F_p$-linear forms, each mapping $\F_p^D$ to $\F_p$; we often abuse notation and let them induce linear maps from $(\F_p^n)^D$ to $\F_p^n$ in the natural way.  Define the operator $T_\Psi$  acting on functions  $f: \F_p^n \to \C$ by the formula
\[T_\Psi(f) := \E_{x_1,\ldots,x_D\in \F_p^n}f(\psi_1(x_1,\ldots,x_D))\cdots f(\psi_t(x_1,\ldots,x_D)),\] 
where throughout $\E$ is a normalised count, so here is shorthand for $\frac{1}{p^{nD}}\sum$. We will always assume that $D \leq t$ and that $\Psi^{-1}(0)=0$ (it is clear that one may reduce to this case).  At times it will be more convenience to describe a system of linear forms by the equations which define its image. Suppose that $M_\Psi$ is an $\F_p$-valued matrix such that $\im \Psi = \ker M_\Psi$. Then we have 
\[ T_\Psi(f) = \E_{\vect x \in \ker M_\Psi}f(x_1)\cdots f(x_t),\]
where $\vect x = (x_1,\ldots,x_t)\in (\F_p^n)^t$.

Throughout this document we will deal with functional versions of the Sidorenko and common properties in which we ask the following of all functions $f:\F_p^n \to [0,1]$, rather than just restricting to characteristic functions of sets:

\begin{defn}\label{d:sid}
A system of $t$ linear forms $\Psi$ is \textit{Sidorenko} if, for all $n\geq 1$ and all $f:\F_p^n \to [0,1]$, we have
\[ T_\Psi(f)  \geq (\E_{x \in \F_p^n}f(x))^t.\]
\end{defn}

Note that any function can be written as the sum of a constant function and a function with zero average, whence the inequality in Definition \ref{d:sid} can be written as $T_\Psi(\alpha + f)  \geq   \alpha^t$,
where the condition must hold for all $\alpha \in [0,1]$ and all $f: \F_p^n \to [-\alpha, 1-\alpha]$ with $\E_x f(x) = 0$. A system is locally Sidorenko if the above inequality is satisfied for all sufficiently small perturbations of the constant function. 

\begin{defn}\label{d:locally-sid}
A system of $t$ linear forms $\Psi$ is \textit{locally Sidorenko} if for all $\alpha \in [0,1]$ there exists $\eps>0$ such that for all $n \geq 1$ and all $f:\F_p^n\to [-\alpha, 1-\alpha]$ with $\E_xf(x) = 0$, we have 
\[ T_\Psi(\alpha + \eps f) \geq \alpha^t.\]
\end{defn}

We remark that that this notion of locality considers small perturbations of the constant function with respect to the $\ell^\infty$ norm on the space of functions. One may also consider analogous notions of locality for different norms. We do not do so in this document, but note that for example the proof of the main result in \cite{Alt22} proves and uses the fact that a certain system of forms is locally Sidorenko with respect to the $\ell^\infty$ norm on the Fourier side. 

Above, the size of the neighbourhood of the constant function is fixed independently of $n,f$. If the size of the neighbourhood is allowed to vary with these parameters, then we will say that the linear system is weakly locally Sidorenko.

\begin{defn}\label{d:weakly-sid}
A system of $t$ linear forms $\Psi$ is \textit{weakly locally Sidorenko} if for all $n\geq 1$, for all $\alpha \in [0,1]$ and all $f:\F_p^n\to [-\alpha, 1-\alpha]$ with $\E_xf(x) = 0$, there exists $\eps_f >0$ such that for all $\eps < \eps_f$ we have
\[ T_\Psi(\alpha + \eps f) \geq \alpha^t.\]
\end{defn}

\begin{defn}\label{d:common}
A system of $t$ linear forms $\Psi$ is \textit{common} if, for all $n\geq 1$ and all $f:\F_p^n \to [0,1]$, we have
\[ T_\Psi(f) + T_\Psi(1-f)  \geq 2^{1-t}.\]
\end{defn}

Local commonness and weak local commonness may be defined analogously.

Let $\Psi=(\psi_1,\ldots, \psi_t)$ be a system of $t$ linear forms. Let $\alpha$ be a constant and $f$ a function on $\F_p^n$ with $\E_x f(x) = 0$. For a subset $S\subset [t]$, we denote the corresponding subsystem of $\Psi$ by $\Psi(S)$. Then, by the multilinearity of the $T$ operator we have
\begin{equation}\label{e:expansion}
T_\Psi(\alpha + f) = \sum_{S \subset [t]} \alpha^{t-|S|}T_{\Psi(S)}(f).
\end{equation}
Note that if $f$ has $\E_xf(x)=0$ and if the image of $\Psi(S)$ contains a coordinate hyperplane then $T_{\Psi(S)}(f) =0$. With some linear algebra, one sees that if a codimension two system is linearly generic (recall: every $2\times 2$ minor of the matrix defining its equations has nonzero determinant), then for all nonempty $S \subset [t]$ with $|S| \leq t-2$, the subsystem $\Psi(S)$ contains a coordinate hyperplane and so yields $T_{\Psi(S)}(f)=0$. Thus for linearly generic systems we have 
\begin{equation}\label{e:lingen-expansion}
T_\Psi(\alpha + f) = \alpha^t + \alpha \sum_{|S|=t-1} T_{\Psi(S)}(f) + T_{\Psi}(f).
\end{equation}
Furthermore, for such systems, if $|S|=t-1$ then the image of $\Psi(S)$ has codimension one (i.e. its image is described by a single equation).

\subsection{Fourier inversion}\label{ss:fourier}
Recall the Fourier transform defined by \[\hat f(h) = \E_{x \in \F_p^n}f(x)e_p(-h \cdot x),\] where $e_p(\cdot) := e^{\frac{2\pi i \cdot}{p}}$. Recall also the Fourier inversion formula: \[f(x) = \sum_{h\in \F_p^n}\hat f(h)e_p(h\cdot x),\]
and Parseval's identity
\[\E_x |f(x)|^2 = \sum_h |\hat f(h)|^2.\]
For a linear system $\Psi$, let $M=M_\Psi$ have $\im \Psi = \ker M$. We will make use of the following consequence of the Fourier inversion formula:
\begin{equation}\label{e:T-inversion}
T_\Psi(f) = \sum_{\vect h \in \im M^T} \hat f(h_1)\cdots \hat f(h_t),
\end{equation}
where $\vect h = (h_1,\ldots, h_t)\in (\F_p^n)^t$.

\subsection{Tensoring}\label{ss:tensor}

\begin{defn}
Let $f_1:\F_p^{n_1} \to \C$ and $f_2: \F_p^{n_2} \to \C$. Then the tensor product $f:= f_1 \otimes f_2 : \F_p^{n_1+n_2} \to \C$ is the function defined by $f(x) = f_1(\pi_1(x))f_2(\pi_2(x))$, where $\pi_1$ is the projection onto the first $n_1$ coordinates of $\F_p^{n_1+n_2}$ and $\pi_2$ is the projection onto the last $n_2$ coordinates. 
\end{defn}

It is clear that $\otimes$ is an associative operation and so one may naturally define the tensor product of a finite collection of functions.

\begin{obs}\label{o:tensor-mult}
For a system of linear forms $\Psi$, we have
\begin{equation}\label{e:tensor}
T_\Psi(f_1 \otimes f_2) = T_\Psi(f_1)T_\Psi(f_2).
\end{equation}
\end{obs}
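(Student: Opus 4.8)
The plan is to unwind both definitions and check that every ingredient factors along the two tensor components. Write $\vect x = (x_1,\ldots,x_D) \in (\F_p^{n_1+n_2})^D$ and decompose $x_j = (y_j,z_j)$ with $y_j = \pi_1(x_j) \in \F_p^{n_1}$ and $z_j = \pi_2(x_j) \in \F_p^{n_2}$. Since each $\psi_i$ is $\F_p$-linear and the induced map on $(\F_p^{n_1+n_2})^D$ acts coordinatewise, the projections $\pi_1,\pi_2$ commute with $\psi_i$: explicitly $\pi_1(\psi_i(x_1,\ldots,x_D)) = \psi_i(y_1,\ldots,y_D)$ and $\pi_2(\psi_i(x_1,\ldots,x_D)) = \psi_i(z_1,\ldots,z_D)$. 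Hence, directly from the definition of the tensor product,
\[(f_1 \otimes f_2)(\psi_i(x_1,\ldots,x_D)) = f_1(\psi_i(y_1,\ldots,y_D))\, f_2(\psi_i(z_1,\ldots,z_D)).\]

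Next I would substitute this into the defining formula for $T_\Psi(f_1 \otimes f_2)$ and use that the uniform measure on $(\F_p^{n_1+n_2})^D$ is the product of the uniform measures on $(\F_p^{n_1})^D$ and $(\F_p^{n_2})^D$, so the expectation over $\vect x$ becomes an iterated expectation over $(y_j)_{j=1}^D$ and $(z_j)_{j=1}^D$ independently. Grouping the $f_1$-factors with the $(y_j)$-average and the $f_2$-factors with the $(z_j)$-average, the expression factors as
\[\Big(\E_{y_1,\ldots,y_D \in \F_p^{n_1}} \prod_{i=1}^t f_1(\psi_i(y_1,\ldots,y_D))\Big)\Big(\E_{z_1,\ldots,z_D \in \F_p^{n_2}} \prod_{i=1}^t f_2(\psi_i(z_1,\ldots,z_D))\Big),\]
which is exactly $T_\Psi(f_1)\,T_\Psi(f_2)$.

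There is no genuine obstacle here; the single point deserving a line of care is that the coordinate projections commute with the linear forms $\psi_i$, which holds because $\psi_i$ has coefficients in $\F_p$ acting diagonally across the $n_1+n_2$ coordinates. Alternatively one could argue via the Fourier identity \eqref{e:T-inversion}: one has $\widehat{f_1 \otimes f_2}(g,k) = \hat f_1(g)\hat f_2(k)$, and $\im M^T$ over the ambient space $\F_p^{n_1+n_2}$ splits as the ``product'' of the corresponding frequency sets over $\F_p^{n_1}$ and $\F_p^{n_2}$ (again since $M^T$ acts diagonally), so the sum defining $T_\Psi(f_1 \otimes f_2)$ factors into the two sums defining $T_\Psi(f_1)$ and $T_\Psi(f_2)$. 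The direct computation above seems cleaner, so that is the route I would take.
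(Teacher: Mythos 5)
Your direct computation is correct and is exactly the ``direct computation'' the paper alludes to: decompose each variable as $x_j=(y_j,z_j)$, use that the coordinate projections commute with the linear forms, and factor the product expectation. Nothing further is needed.
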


Note that there is a slight abuse of notation here: we interpret $T_\Psi$ as an operator on $\F_p^{n_1+n_2}$, $\F_p^{n_1}$ and $\F_p^{n_2}$ as it appears respectively. The equation above is easily verified by direct computation.

\section{Proof of Theorem \ref{t:classification}}\label{s:generic-rank2}

Recall that we call a system of two equations linearly generic if 
the $2 \times t$ matrix \[M:=\begin{pmatrix}
  b_{11} & b_{12} & \cdots & b_{1t} \\ 
  b_{21} & b_{22} & \cdots & b_{2t}
\end{pmatrix}\]
which defines the equations has the property that  all of its $2 \times 2$ minors have nonzero determinant.
 
\subsection{Systems without additive tuples are not weakly locally Sidorenko}
We firstly work towards proving the following theorem, which is one of the statements in Theorem \ref{t:classification}.
\begin{thm}\label{t:weakSid}
Let $\Psi$ be a system of two linear equations in $t$ variables which is linearly generic. If $t$ is even or if $\Psi$ does not contain an additive tuple and $p$ is sufficiently large in terms of $t$, then $\Psi$ is not weakly locally Sidorenko.
\end{thm}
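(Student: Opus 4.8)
The goal is to produce, for a linearly generic rank-two system $\Psi$ with $t$ even (or with $t$ odd, $p$ large, and no additive tuple), some $n$, some $\alpha\in[0,1]$ and some mean-zero $f:\F_p^n\to[-\alpha,1-\alpha]$ so that $T_\Psi(\alpha+\eps f)<\alpha^t$ for all small $\eps>0$. Because of the expansion \eqref{e:lingen-expansion}, $T_\Psi(\alpha+\eps f)-\alpha^t = \alpha\eps\sum_{|S|=t-1}T_{\Psi(S)}(f) + \eps^t T_\Psi(f)$, so the sign for small $\eps$ is governed by the linear term $L(f):=\sum_{|S|=t-1}T_{\Psi(S)}(f)$: it suffices to find $f$ with $\E_x f=0$, $\|f\|_\infty$ small enough that $\alpha+\eps f$ stays in range (any $\alpha\in(0,1)$ works after rescaling $f$), and $L(f)<0$. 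So the whole problem reduces to showing the linear functional $L$ is not nonnegative on mean-zero functions.

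**Moving to Fourier space.** The plan is to analyse each $T_{\Psi(S)}(f)$ via \eqref{e:T-inversion}. Each $\Psi(S)$ with $|S|=t-1$ has codimension one, so its image in frequency space, $\im M_{\Psi(S)}^T$, is a one-dimensional subspace of $(\F_p^n)^{t-1}$ of the form $\{(c_1^{(S)}h,\dots,c_{t-1}^{(S)}h):h\in\F_p^n\}$ for explicit scalars $c_i^{(S)}$ read off from $M$ (here is where linear genericity guarantees every relevant minor is invertible, so all $c_i^{(S)}\neq 0$). Thus $T_{\Psi(S)}(f)=\sum_{h}\prod_{i\in S}\hat f(c_i^{(S)}h)$, and $L(f) = \sum_h \sum_{|S|=t-1}\prod_{i\in S}\hat f(c_i^{(S)}h)$. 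The key observation flagged in the introduction is that even though $\Psi$ has complexity one, this \emph{dual} expression need not: the collection of codimension-one subsystems, viewed as a system in the single frequency variable $h$, can have complexity $\ge 2$. The strategy is to choose $\hat f$ cleverly — a small-support function, or one supported on a carefully chosen subspace, possibly built by tensoring (Observation \ref{o:tensor-mult}) — so that the $h$-sum telescopes into something manifestly negative. Concretely I would try $f$ of the form $\eta\big(\sum_{j} (e_p(a_j\cdot x) + e_p(-a_j\cdot x)) - \text{const}\big)$ or, in the no-additive-tuple case, a quadratic-type frequency construction, to exploit that the dual system fails to pair up.

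**The two cases.** For $t$ \textbf{even} I expect a clean, low-complexity construction: a real function with Fourier support on a single carefully chosen pair $\{a,-a\}$ (or a few such pairs) should make all but a controllable set of the products $\prod_{i\in S}\hat f(c_i^{(S)}h)$ vanish, and with $t$ even the surviving terms come with a fixed sign one can force negative — this mirrors the codimension-one fact that even-variable non-pairable equations fail Sidorenko, now applied to the dual. For $t$ \textbf{odd with no additive tuple}, the absence of an additive $(t-1)$-tuple is exactly the statement that \emph{no} codimension-one subsystem $\Psi(S)$ is itself Sidorenko (its coefficients cannot be partitioned into zero-sum pairs, by Fox--Pham--Zhao), which is what lets the linear term be made negative; the largeness of $p$ is needed to keep the finitely many coefficient ratios $c_i^{(S)}/c_j^{(S)}$ distinct and nonzero and to run a genuinely higher-order (degree-two) construction in the frequency variable without degenerate coincidences. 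This is where I would invoke a Gowers-type quadratic construction on the Fourier side — the paper's advertised novelty.

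**Main obstacle.** The hard part will be the odd, no-additive-tuple case: showing $L(f)<0$ requires building a function whose Fourier transform realises a higher-order configuration in $h$ \emph{and} simultaneously verifying that the $\eps^t T_\Psi(f)$ term (and any lower-order cross terms I may have swept into "linearly generic $\Rightarrow$ vanishing") genuinely don't interfere for small $\eps$ — plus bookkeeping all the coefficient ratios to ensure the construction isn't accidentally Sidorenko-respecting. The even case I expect to be comparatively routine once the Fourier reduction is set up; the structural input (no additive tuple $\Leftrightarrow$ no codimension-one subsystem is pairable) does the conceptual work, and the remaining effort is a concrete frequency-space construction plus the $p$-large non-degeneracy check.
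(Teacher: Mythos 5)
Your opening reduction is essentially right (though note the relevant term in \eqref{e:lingen-expansion} is $\alpha\eps^{t-1}\sum_{|S|=t-1}T_{\Psi(S)}(f)$, not $\alpha\eps\sum_{|S|=t-1}T_{\Psi(S)}(f)$; this does not change the logic), and the even case is indeed routine --- in fact more routine than your sketch: since $t-1$ is odd, $\sum_{|S|=t-1}T_{\Psi(S)}(-f)=-\sum_{|S|=t-1}T_{\Psi(S)}(f)$, so one only needs the sum to be nonzero for some mean-zero $f$ and then flips the sign. The genuine gap is in the main case, $t$ odd with no additive $(t-1)$-tuple. First, your structural input is false: ``no additive $(t-1)$-tuple'' is \emph{not} equivalent to ``no codimension-one subsystem is Sidorenko''. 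An equation such as $x_1-x_2+2x_3-2x_4=0$ has coefficients partitioned into zero-sum pairs, hence is Sidorenko and satisfies $T_{\Psi(S)}(f)=\sum_h|\hat f(h)|^2|\hat f(2h)|^2\ge 0$ for every $f$, yet it is not an additive tuple, and such subsystems are perfectly compatible with the hypotheses of the theorem. Consequently your plan of making the degree-$(t-1)$ term strictly negative can fail outright: some (conceivably all) of the summands may be nonnegative functionals, and even when every subsystem is individually non-Sidorenko you give no mechanism for making all $t$ of them, or their sum, negative \emph{simultaneously} with one function --- that simultaneity is exactly the hard multi-equation issue (compare Lemma \ref{l:klm41} and the surrounding discussion), and Example \ref{ex:weak-not-local} shows that for closely related linearly generic systems the corresponding sum is nonnegative for every $f$, so this route is not available in general.

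The paper's proof sidesteps this entirely: for each $S$ with $|S|=t-1$ it constructs $f_S$ with $\E f_S=0$, $T_{\Psi(S)}(f_S)=0$ but $T_\Psi(f_S)\ne 0$ (Propositions \ref{p:generic-linear} and \ref{p:special}: one supports $\hat f$ on the points $\pm u_i$ of a vector $u=M^T\binom{h_1}{h_2}$, and in the all-$\pm1$-coefficients case uses complex phases $e^{\pi i/(2l)}$; this is where ``$p$ large'' enters, to find $u\in\im M_\Psi^T$ avoiding the coordinate hyperplanes and all signed permutations of $\im M_\Psi^T$ --- not to run a quadratic construction). Then $f=\bigotimes_{|S|=t-1} f_S$ makes every $T_{\Psi(S)}(f)$ vanish by Observation \ref{o:tensor-mult}, while $T_\Psi(f)=\prod_S T_\Psi(f_S)\ne 0$; since $t$ is odd one replaces $f$ by $-f$ if necessary and gets $T_\Psi(\alpha+\eps f)=\alpha^t+\eps^tT_\Psi(f)<\alpha^t$ for all $\eps$, winning at order $\eps^t$ with the $(t-1)$-st order term identically zero rather than negative. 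Note also that no Gowers-type quadratic construction appears in this theorem; the higher-order frequency-space construction you invoke is the content of the \emph{other} half of Theorem \ref{t:classification} (Proposition \ref{p:quadratic}, for systems containing an additive tuple, where $T_\Phi(f)=0$ forces $f=0$ and so the tensor trick is unavailable). Your proposal is therefore missing the two key ideas of this proof: the annihilate-and-tensor strategy, and the explicit Fourier-support constructions with complex phases that realise it.
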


We now prove the theorem, modulo the proof of two propositions which will appear subsequently and occupy the remainder of the subsection. In both propositions, we prove something more general than what is needed for the application in this document. We do so to illustrate the scope of the constructions and because it is clear that these more general statements can be used to obtain more general results. See also Remark \ref{r:relax}.  

\begin{proof}[Proof of Theorem \ref{t:weakSid}]
Recall from Equation (\ref{e:lingen-expansion}) that we have 
\[T_\Psi(\alpha + \eps f) = \alpha^t + \alpha \eps^{t-1}\sum_{|S|=t-1} T_{\Psi(S)}(f) + \eps^t T_\Psi(f).\]
It suffices to show that there exists $f$ with $\E f = 0$ such that $\sum_{|S|=t-1} T_{\Psi(S)}(f)<0$; then one makes $\eps$ appropriately small to conclude. If $t-1$ is odd, then this is trivial: take $f$ for which $\sum_{|S|=t-1} T_{\Psi(S)}(f) \ne 0$ (it is not difficult to see that such an $f$ must exist), and if $\sum_{|S|=t-1} T_{\Psi(S)}(f)  > 0$ then replace $f$ with $-f$. Henceforth we assume that $t$ is odd.

Our strategy will be to find, for each $S \subset [t]$ with $|S|=t-1$, a function $f_S$ with $\E f_S=0$ such that $T_{\Psi(S)}(f_S)=0$ and $T_{\Psi}(f_S) \ne 0$. Then we let $f = \bigotimes_{|S|=t-1} f_S$ and by (\ref{e:tensor}) we see that 
\[T_\Psi(\alpha + \eps f) = \alpha^t +  \eps^t T_\Psi(f),\]
where $T_\Psi(f) = \prod_{|S|=t-1} T_{\Psi}(f_S)\ne 0$. Recall that $t$ is odd so potentially replacing $f$ with $-f$ depending on the sign of $T_\Psi(f)$, we may conclude that for all $\eps$ sufficiently small (indeed, for all $\eps$), we have $T_\Psi(\alpha + \eps f) < \alpha^t$, so $\Psi$ is not weakly locally Sidorenko.

It remains to prove the existence of the functions $f_S$. Recall that each of the systems $\Psi(S)$ has codimension one, so is described by a single equation. This equation cannot be of the form $\sum_{i} (-1)^i x_i =0$ by our assumption that $\Psi$ does not contain an additive tuple. We prove the existence of such an $f_S$ in Proposition \ref{p:generic-linear} and Proposition \ref{p:special}. The first deals with the easier case in which there is a pair of coefficients whose ratio is not equal to $\pm 1$, and the second deals with the remaining cases which are not additive tuples.
\end{proof}

\begin{prop}\label{p:generic-linear}
Let $\Psi$ be a codimension $2$ linear system in $t$ variables which is linearly generic.  Let $\Phi$ be determined by a single equation: $\sum_{i=1}^{l}a_ix_i=0$, where each $a_i$ is nonzero and such that there are $i,j$ such that $a_i \ne \pm a_j$. For all $n\geq 2$, there exists $f: \F_p^n \to [-1,1]$ with $\E_x f (x)=0$, with $T_{\Phi}(f) = 0$ and with $|T_\Psi(f)| \gg_t 1$.
\end{prop}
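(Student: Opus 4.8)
The plan is to construct $f$ as a linear combination of a small number of additive characters plus a correction to ensure $\E f = 0$. Since $\Phi$ is given by the single equation $\sum_{i=1}^l a_i x_i = 0$ with all $a_i$ nonzero, Equation (\ref{e:T-inversion}) tells us that $T_\Phi(f) = \sum_{\vect h} \hat f(h_1)\cdots \hat f(h_l)$ where the sum is over $\vect h = (h_1,\ldots,h_l)$ lying in the image of the transpose of the $1 \times l$ coefficient matrix, i.e. over all $\vect h$ of the form $(a_1 h, a_2 h, \ldots, a_l h)$ for $h \in \F_p^n$ (using that the single defining relation of $\Phi$ has this coefficient vector). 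So $T_\Phi(f) = \sum_{h \in \F_p^n} \hat f(a_1 h) \cdots \hat f(a_l h)$, and similarly $T_\Psi(f)$ is a sum over $\vect h \in \im M^T$, a $2$-dimensional space of frequency tuples, each coordinate of which is a fixed nonzero linear combination of two free parameters. Because $\Psi$ is linearly generic, each coordinate map on $\im M^T$ is surjective and any two coordinates are jointly surjective; this is the property I will exploit to make $T_\Psi(f)$ nonzero while keeping $T_\Phi(f) = 0$.

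Here is the construction. Pick indices $i \ne j$ with $a_i \ne \pm a_j$; without loss of generality relabel so these are $1$ and $2$. Choose a nonzero frequency $r \in \F_p^n$ (possible since $n \ge 2$, giving us room) and set $\hat f$ to be supported on a short symmetric list of frequencies built from $r$, say on $\{\lambda_1 r, \lambda_2 r, \ldots\}$ and their negatives, with prescribed small real values, together with the value $\hat f(0) = 0$ to enforce $\E f = 0$; reality of $f$ is enforced by taking $\hat f(-h) = \overline{\hat f(h)}$. The key point: in the sum $T_\Phi(f) = \sum_h \prod_{i=1}^l \hat f(a_i h)$, a term is nonzero only if $a_i h$ lies in $\supp \hat f$ for all $i$ simultaneously; since $a_1 \ne \pm a_2$, arranging the support carefully (e.g. so that the scalars appearing are multiplicatively "generic" modulo $p$, or simply choosing $\supp \hat f$ to avoid the finitely many bad coincidences — this is where $p$ large, or a careful finite choice, helps) forces every such term to vanish, giving $T_\Phi(f) = 0$. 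Meanwhile $T_\Psi(f) = \sum_{\vect h \in \im M^T} \prod_{i=1}^t \hat f(h_i)$: because the coordinate maps on the $2$-dimensional space $\im M^T$ are pairwise jointly surjective, we can pick a specific $\vect h_0 \in \im M^T$ all of whose coordinates lie in our chosen support (by solving two linear equations in the two parameters), and we then tune the values of $\hat f$ on the support so that the contribution of $\vect h_0$ and its "partners" dominates, yielding $|T_\Psi(f)| \gg_t 1$. The lower bound independent of $n$ comes because everything happens on the one-dimensional subspace $\F_p r$, so the whole construction is really a fixed construction in $\F_p^1$ tensored trivially.

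The main obstacle I anticipate is the bookkeeping of the two competing constraints simultaneously: I need the support of $\hat f$ to be rich enough that $\im M^T$ genuinely meets $(\supp \hat f)^t$ in a controlled way (so $T_\Psi(f)$ has a term I can make large), yet sparse or generic enough that $\{(a_1 h,\ldots,a_l h) : h\}$ never lands entirely in $(\supp \hat f)^l$ (so $T_\Phi(f) = 0$). The tension is real because the $\Psi$-relations and the $\Phi$-relation involve overlapping coefficients. I expect to resolve it by choosing the support to be a geometric-like progression $\{c^k r\}$ for a suitable unit $c \in \F_p$ of large multiplicative order, so that the $\Phi$-condition $a_i h \in \supp$ for all $i$ would force the ratios $a_i/a_j$ to be powers of $c$ — excluded for the pair $(1,2)$ once $c$ is chosen avoiding the finitely many forbidden ratios — while the $\Psi$-conditions, having two free parameters, are easy to satisfy. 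The hypothesis $a_i \ne \pm a_j$ is exactly what lets the $\pm 1$ ambiguity (forced by reality of $f$) not sabotage the argument. A cleaner alternative, which I would try first, is to take $\hat f$ supported on just $\{r, -r, 2r, -2r\}$ (so $f$ is an explicit trigonometric polynomial in one variable) and check the two conditions by hand using $a_1 \ne \pm a_2$ and linear genericity; if the explicit small example works it avoids all the multiplicative-order machinery.
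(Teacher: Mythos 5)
Your overall shape is right (build $f$ on the Fourier side, use $a_1 \ne \pm a_2$ to kill $T_\Phi$ while keeping a surviving term in $T_\Psi$), but the crucial step is where the gap lies. You propose to fix a sparse support for $\hat f$ first (multiples of a single $r$, a geometric progression $\{c^k r\}$, or even just $\{\pm r, \pm 2r\}$) and then find $\vect h_0 \in \im M^T$ with all coordinates in that support ``by solving two linear equations in the two parameters.'' That is not two equations: a point of $\im M^T$ is $M^T\binom{h_1}{h_2}$, and you need all $t \ge 5$ of its coordinates $b_{1i}h_1+b_{2i}h_2$ to land in your pre-chosen set, i.e.\ $t$ membership constraints against only $2$ free parameters. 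For a support chosen in advance (in particular $\{\pm r,\pm 2r\}$) this generically has no solution at all, so $T_\Psi(f)$ can simply be $0$ and the construction collapses. Moreover, restricting everything to the line $\F_p r$ forces $h_1,h_2$ to be proportional, and then avoiding the coincidences $a_1 u_i = \pm a_2 u_j$ becomes a matter of dodging roughly $2t^2$ lines in $\F_p^2$, which you can only do when $p$ is large in terms of $t$ — but the proposition carries no such hypothesis (largeness of $p$ is only assumed in the companion proposition for $\pm 1$-coefficient equations). Your ``tune the values so $\vect h_0$ dominates'' step is also left vague, since with complex phases other terms of $T_\Psi$ could cancel it.

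The fix is to reverse the order of choices, which is exactly the paper's move: first pick $h_1,h_2 \in \F_p^n$ linearly independent (this is the only place $n\ge 2$ is used), set $u = (u_1,\ldots,u_t) = M^T\binom{h_1}{h_2}$, and only then define $\hat f(\pm u_i) = 1/(2t)$ and $\hat f = 0$ elsewhere. Linear independence of $h_1,h_2$ plus linear genericity gives for free that every $u_i \ne 0$ and that $a_1 u_i \ne \pm a_2 u_j$ for all $i,j$ (a relation $a_1 u_i = \pm a_2 u_j$ would force $a_1\binom{b_{1i}}{b_{2i}} = \pm a_2\binom{b_{1j}}{b_{2j}}$, contradicting the nonvanishing $2\times 2$ minors, and the case $i=j$ is excluded by $a_1 \ne \pm a_2$). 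Hence for every $h$ at least one of $\hat f(a_1h), \hat f(a_2h)$ vanishes, so $T_\Phi(f)=0$; and since all Fourier coefficients are the nonnegative constant $1/(2t)$, every term of $T_\Psi(f)$ is nonnegative and the terms at $\pm(h_1,h_2)$ already give $T_\Psi(f) \ge 2/(2t)^t$, with no tuning, no cancellation analysis, and no assumption on $p$. So: keep your framework, but let the support be the coordinate set of a chosen point of $\im M^T$ rather than a set chosen beforehand.
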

\begin{proof}
Without loss of generality  $a_1,a_2$ are such that $a_1\ne \pm a_2$. We construct $f$ from its Fourier coefficients. Let $M=M_\Psi=(b_{ij})$ be the $2 \times t$ matrix whose coefficients correspond to the two linear equations of $\Psi$. Using (\ref{e:T-inversion}) we have that 
\begin{equation}\label{e:ts-linear}
T_{\Phi} (f) = \sum_{h \in \F_p^n} \prod_{i=1}^{l} \hat f(a_ih),
\end{equation} and
\begin{equation}\label{e:t-linear}
 T_\Psi(f) = \sum_{h_1,h_2} \prod_{i=1}^{t}\hat f (b_{1i}h_1 + b_{2i}h_2).
 \end{equation}
We claim that if $h_1,h_2$ are linearly independent, then $u:=  (u_1,\cdots, u_t) :=  M^T\binom{h_1}{h_2}$ satisfies $u_i \ne 0$ for all $i$ and $a_1u_i \ne \pm a_2u_j$ for all $i,j$. Indeed, that $u_i \ne 0$ follows from linear independence the fact that $M$ cannot have a zero column, and if $a_1u_i = \pm a_2u_j$, then by the linear independence of $h_1,h_2$ we have $a_1\binom{b_{1i}}{b_{2i}} =\pm a_2\binom {b_{1j}}{b_{2j}}$ which contradicts the fact that all $2\times 2$ minors of $M$ are full rank. Choose any two linearly independent $h_1,h_2$ and define $u_1,\ldots,u_t$ as above.

Now define $\hat f(\pm u_i)=1/(2t)$ for all $i$ and $\hat f(h)=0$ otherwise so that $f$ is clearly real-valued and takes values in $[-1,1]$ by Fourier inversion and the triangle inequality. Since $u_i\ne 0$ for all $i$ we have $\hat f (0) = \E_xf(x) = 0$. Next, since $a_1 \ne \pm a_2$ and since $a_1u_i \ne \pm a_2u_j$ for all $i,j$ we have that at least one of $\hat f (a_1h), \hat f (a_2h)$ is zero for all $h$ and so $T_{\Phi}(f)=0$ from (\ref{e:ts-linear}). Finally, $T_\Psi(f) \geq 2/(2t)^t$ from (\ref{e:t-linear}).
\end{proof}

In the following Proposition we prove a more general statement than what is needed for the proof of Theorem \ref{t:weakSid}. Below we consider equations of arbitrary length with $\pm 1$ coefficients. For the theorem above, we just need to consider equations of length $t-1$ (which we recall may be assumed to be even): $\sum_{i=1}^{t-1}a_ix_i=0$, where $a_i \in \pm 1$. In this case, since $t-1$ is even, we have for parity reasons that $|\#\{a_i=1\} - \#\{a_i=-1\}|$  must be even, and so the condition that $t \ne (2m+1)|\#\{a_i=1\} - \#\{a_i=-1\}|$ from the statement of the proposition below is automatically satisfied. 

\begin{prop}\label{p:special}
Let $\Psi$ be a codimension $2$ linear system in $t$ variables which is linearly generic.  Let $\Phi$ be determined by the equation $\sum_{i=1}a_ix_i=0$, where each $a_i\in \{-1,1\}$ and $\#\{a_i=1\} \ne \#\{a_i=-1\}$. If $p$ is sufficiently large in terms of $t$ and $t\ne (2m+1)|\#\{a_i=1\} - \#\{a_i=-1\}|$ for any integer $m$, 
then there exists $f: \F_p \to [-1,1]$ with $\E_x f (x)=0$, with $T_{\Phi}(f) = 0$ and with $|T_\Psi(f)| \gg_{p,t} 1$.
\end{prop}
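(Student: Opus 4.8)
The plan is to mimic the strategy of Proposition \ref{p:generic-linear}, constructing $f$ from its Fourier support, but now the difficulty is that $\Phi$ has all $\pm 1$ coefficients, so the "multiplicative trick" of scaling frequencies by $a_1,a_2$ no longer separates the supports. Instead I would work over $\F_p$ directly (so $n=1$), and choose the Fourier support of $f$ to be a carefully chosen finite set $A=-A\subset\F_p^\times$ with the two properties: (i) $A$ contains \emph{no} solution to the equation $\sum_{i}a_i y_i=0$ with $y_i\in A$ (so that $T_\Phi(f)=0$ by (\ref{e:ts-linear}), reading the exponent sum as ranging over $h$ and noting each summand is a product of $\hat f$-values at $\pm$-combinations), while (ii) $A$ \emph{does} contain a solution to the dual equations of $\Psi$, i.e. there exist linearly-independent-enough $h_1,h_2$ — but here $n=1$ so "linearly independent" must be replaced by a genericity condition on the scalar ratio $h_1/h_2$ — such that the tuple $M^T\binom{h_1}{h_2}$ has all $t$ entries lying in $A$, giving $|T_\Psi(f)|\gg_{p,t}1$ from (\ref{e:t-linear}).

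The first step is to understand when an equation $\sum_{i}a_iy_i=0$ with $a_i\in\{\pm1\}$ and $k_+:=\#\{a_i=1\}\ne k_-:=\#\{a_i=-1\}$ is "avoidable" by a large-ish dilation-symmetric set. The natural candidate for $A$ is a geometric-progression-like set, e.g. $A=\{\pm r^j: 0\le j< L\}$ for a multiplicative generator $r$ of a large subgroup; a relation among elements of $A$ then becomes a vanishing signed sum of powers of $r$, and I would argue via the structure of such sums (essentially, that a short $\pm$-signed sum of distinct powers of $r$ cannot vanish unless the multiset of exponents pairs off, which is governed by $k_+=k_-$) that $A$ is $\Phi$-free. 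This is exactly where the arithmetic condition $t\neq(2m+1)|k_+-k_-|$ must enter: when $t=(2m+1)|k_+-k_-|$ one can build a vanishing relation by repeating blocks, and conversely outside this case no such coincidence occurs for $p$ large. Making this rigorous for all $p$ large in terms of $t$ is the step I expect to be the main obstacle — it is a "higher-order" statement about the frequency side (as the introduction advertises), and will likely require either a Freiman-type / polynomial-method argument bounding the number of solutions, or an explicit choice of $r$ (say a high-order element, or $r$ transcendental-like modulo $p$ via a lift from $\Z$) together with a bound on the heights of the integer relations $\sum a_i r^{j_i}=0$.

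Second, I would exhibit the required solution to the $\Psi$-side inside $A$: pick $h_1,h_2\in\F_p^\times$ so that $u:=M^T\binom{h_1}{h_2}=(u_1,\dots,u_t)$ has every coordinate in $A$. Since each $u_i=b_{1i}h_1+b_{2i}h_2$ is a nonzero linear form (no zero columns of $M$), and since the twelve-or-so forbidden coincidences $u_i=\pm u_j$ are ruled out by linear genericity exactly as in Proposition \ref{p:generic-linear} once $h_1/h_2$ avoids a bounded set of bad ratios, I can first fix the \emph{ratios} $u_i/u_1$ (these are determined by the bad-ratio-free choice of $h_1/h_2$, independent of $p$), and then observe that scaling $(h_1,h_2)$ by a common factor $\lambda$ scales all $u_i$ by $\lambda$; so it suffices that some dilate $\lambda\cdot\{u_1,\dots,u_t\}$ lands inside $A$. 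Choosing $A$ to be a union of a few dilates of the small set $\{u_i\}$ (still with $A=-A$ and still $\Phi$-free by the analysis above, enlarging $L$ and $p$ as needed) makes this automatic. Finally set $\hat f(y)=1/(2|A|)$ for $y\in A$ and $0$ otherwise: then $f$ is real, $\|f\|_\infty\le 1$ by the triangle inequality, $\E f=\hat f(0)=0$ since $0\notin A$, $T_\Phi(f)=0$ by $\Phi$-freeness of $A$, and $T_\Psi(f)\ge (2|A|)^{-t}>0$ from the single solution tuple $\pm u$, with $|A|\ll_{p,t}1$, giving $|T_\Psi(f)|\gg_{p,t}1$ as claimed.
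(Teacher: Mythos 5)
There is a genuine gap, and it is at the very first step of your plan: making $T_\Phi(f)=0$ by choosing a ``$\Phi$-free'' Fourier support cannot work. For a single equation $\sum_i a_i x_i=0$ the dual sum (\ref{e:ts-linear}) is not indexed by solutions of that equation lying in the support $A$; it is indexed by the one-dimensional image of $M_\Phi^T$, i.e.\ by the diagonal tuples $(a_1h,\ldots,a_\ell h)$ with $h\in\F_p$. Since $a_i\in\{\pm1\}$ and $f$ is real (so $A=-A$ and $\hat f(-h)=\overline{\hat f(h)}$), the summand at $h$ is $\hat f(h)^{k_+}\overline{\hat f(h)}^{\,k_-}=|\hat f(h)|^{2\min(k_+,k_-)}\hat f(h)^{l}$ with $l=|k_+-k_-|>0$, which is nonzero for \emph{every} $h$ in the support of $\hat f$, regardless of whether $A$ contains solutions to $\sum_i a_iy_i=0$. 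Worse, with your choice $\hat f\equiv 1/(2|A|)$ on $A$ all these terms are strictly positive, so your construction gives $T_\Phi(f)=|A|\,(2|A|)^{-\ell}>0$, not $0$. (Support-disjointness genuinely cannot help here: a nonempty symmetric support always contributes.) Consequently your reading of the hypothesis $t\ne(2m+1)|k_+-k_-|$ as a condition guaranteeing solution-freeness of a geometric-progression-like set is also misplaced; that hypothesis plays no such role.

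What is actually needed is a phase trick, not a support trick. The paper picks a single generic point $u=(u_1,\ldots,u_t)\in V:=\im M_\Psi^T$ whose coordinates are nonzero and pairwise distinct up to sign, and moreover such that $V$ meets its images under nontrivial signed coordinate permutations only in small subspaces (a counting argument over $\F_p^t$, valid for $p$ large in terms of $t$; linear genericity rules out $V=\sigma V$). It then sets $\hat f(u_i)=c_i e^{\pi i/(2l)}/(2t)$, $\hat f(-u_i)=\overline{\hat f(u_i)}$, so that $\Re(\hat f(h)^l)=0$ for every $h$ in the support; since $T_\Phi(f)$ is real, (\ref{e:tsones-linear}) gives $T_\Phi(f)=\sum_h|\hat f(h)|^{2k}\Re(\hat f(h)^l)=0$ exactly, with no freeness condition on the support. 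The hypothesis $t\ne(2m+1)l$ enters only on the $\Psi$ side: viewing $T_\Psi(f)$ as a polynomial in the amplitudes $c_1,\ldots,c_t$, the genericity of $u$ forces the coefficient of $\prod_i c_i$ to be $\frac{2}{(2t)^t}\Re\bigl(e^{t\pi i/(2l)}\bigr)$, which is nonzero precisely when $t\ne(2m+1)l$; one then chooses the $c_i$ to make $|T_\Psi(f)|\gg_{p,t}1$. Note also that because the phases are complex, you cannot argue positivity of $T_\Psi(f)$ term by term as you do; this is why the polynomial-in-$c_i$ argument is needed. Your step (ii), locating a dual solution tuple with controlled coincidences, is in the right spirit, but the heart of the proposition is the phase cancellation you are missing.
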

\begin{proof}
Let $V = \im M_\Psi^T \leq \F_p^t$. For $i=1,\ldots , t$, let $P_i$ be the $i$th coordinate hyperplane (i.e. $\{x \in \F_p^t : x_i = 0\}$).  Let $\Sym(t)$ the symmetric group on $t$ elements act on $\F_p^t$ by coordinate permutation, and let $(\Z/2\Z)^t$ act on $\F_p^t$ by reflection in each coordinate hyperplane (so for example $(1,0,1,0,\ldots)\in (\Z/2\Z)^t$ maps $(u_1,u_2,u_3,u_4,\ldots)$ to $(-u_1,u_2,-u_3,u_4,\ldots)$). Let $G$ be the group of permutations of $\F_p^t$ generated by $\Sym(t)$ and $(\Z/2\Z)^t$ and let $S = G - \{\id , (1,1,\ldots,1)\}$.  

We claim that $V - \left( \bigcup_{i=1}^t P_i \cup \bigcup_{\sigma \in S} \sigma V\right)$ is non-empty. Indeed,
\[\left| V - \left( \bigcup_{i=1}^t P_i \cup \bigcup_{\sigma \in S} \sigma V\right)\right| \geq |V| - \sum_{i=1}^t |V\cap P_i| - \sum_{\sigma\in S}|V\cap \sigma V|,\]
where each $V\cap P_i$ and $V\cap \sigma V$ is a subspace of $V$, so has size $1,p$ or $p^2$. By taking $p$ sufficiently large in terms of $t$, it suffices to show that $V \ne V\cap P_i$ and $V \ne \sigma V$ for each $i$ and $\sigma \in S$. That $V \ne V \cap P_i$ follows from the fact that $M$ cannot have a zero column, and that $V\ne \sigma V$ for every $\sigma \in S$ is an easy exercise in linear algebra, using the fact that each $2\times 2$ minor of $M$ has full rank and that every element of $G$ may be written in the form $\sigma \tau$ for some $\sigma \in \Sym(t)$ and $\tau \in (\Z/2\Z)^t$. 

Let $u :=(u_1,\ldots,u_t) \in V - \left( \bigcup_{i=1}^t P_i \cup \bigcup_{\sigma \in S} \sigma V\right)$. We may assume that $\#\{a_i = 1\} > \#\{a_i=-1\}$ so that by Fourier inverting
\begin{equation}\label{e:tsones-linear}
T_{\Phi} (f) = \sum_{h \in \F_p} |\hat f(h)|^{2k}\hat f(h)^l,
\end{equation}
where $k \geq 0$, $l>0$. We have by construction that each $\pm u_i$ is distinct and nonzero. Let $\frac{1}{2} \leq c_1,\ldots, c_t \leq 1$ be real numbers to be chosen later and define $\hat f(u_i):= c_i e^{\frac{\pi i}{2l}}/(2t)$ and $\hat f(-u_i) = \overline{\hat f(u_i)}$ for all $i$. Set $\hat f(h) = 0$ for all other $h$. Then $\E_xf(x) = \hat f(0) = 0$, $f$ takes values in $[-1,1]$ and $\Re (\hat f(\pm u_i)^l) = 0$ for all $i$.  Thus we have
\[ T_{\Phi} (f) = \Re\left( T_{\Phi} (f)\right) = \sum_{h \in \F_p^n} |\hat f(h)|^{2k}\Re(\hat f(h)^l)=0.\]
It remains to argue that $|T_\Psi(f)| \gg_{p,t} 1$. Fourier inverting we have 
\[ T_\Psi(f) = \sum_{y_1,y_2\in \F_p}\prod_{i=1}^t \hat f(b_{1i}y_1 + b_{2_i}y_2).\]
 Let $U=\{\pm u_1,\ldots, \pm u_t\}$ and let $\mathcal{I}$ be the collection of multisubsets of $[t]$ which have $t$ elements. For a multisubset $W$ of $U$ with $t$ elements, let $i(W)\in \mathcal{I}$ be the multisubset of indices in $W$ (for example if $t=4$ we may have $i([u_1,u_1,u_3,-u_1]) = [1,1,3,1]$), and let $\Sigma(W)$ be the sum of the signs in $W$ (so $\Sigma([u_1,u_1,u_3,-u_1]) = 1+1+1-1=2$). Then:
\begin{equation}\label{e:combin_tpsi}
T_\Psi(f) = \sum_{I\in \mathcal{I}} \left( \sum_{W:i(W)=I} \#\{y_1,y_2:M^T\binom{y_1}{y_2}=W\}\cdot \frac{1}{(2t)^t}e^{\frac{\Sigma(W)\pi i}{2l}}  \right) \prod_{i\in I}c_i,
\end{equation}
where we abuse notation by implicitly `unordering' the vector $M^T\binom{y_1}{y_2}$ and identifying it with the corresponding multiset. Viewing (\ref{e:combin_tpsi}) as a polynomial in $c_1,\ldots, c_t$, we may choose $c_1,\ldots, c_t$ in such a way to force $|T_\Psi(f)|\gg_{p,t} 1$ unless (\ref{e:combin_tpsi}) is the zero polynomial. Note that when $I=[t]$, we have by our construction of $u=(u_1,\ldots, u_t)$ that the only multisubsets $W$ for which $\#\{y_1,y_2:M^T\binom{y_1}{y_2}=W\}$ is nonzero are $W=[u_1,u_2,\ldots, u_t]$ and $W=[-u_1,-u_2,\ldots, -u_t]$, and in these cases $\#\{y_1,y_2:M^T\binom{y_1}{y_2}=W\}=1$. Furthermore, for these $W$ we have that $\Sigma(W) = t$ and $-t$ respectively. Therefore, the coefficient of the term $\prod_{i=1}^t c_i$ in (\ref{e:combin_tpsi}) is $\frac{2}{(2t)^t}\Re\left(e^{\frac{t\pi _i}{2l}}\right)$, which is nonzero since $t$ is not of the form $(2m+1)l$ for any integer $m$. Thus we may choose $c_1,\ldots, c_t$ so that $|T_\Psi(f)|\gg_{p,t} 1$, completing the proof.
\end{proof}

\begin{remark}\label{r:relax}
It is not difficult to adapt the construction from the previous Proposition to relax the assumption that $t \ne (2m+1)l$ to the assumption that $l\ne 1$ or $t$ is even; one proceeds by defining $\hat f(u_i):= c_i e^{\frac{(2n_i+1)\pi i}{2l}}$ for an appropriate choice of integers $\{n_i\}_{i=1}^t$. In fact, it seems likely that a modest perturbation of the above argument may allow us to remove any assumption of this kind completely. We don't pursue the matter in this document as we have no immediate need for the more general statement. We hope that the above example is illustrative.
\end{remark}

\subsection{Systems with additive tuples are not locally Sidorenko}

Recall that an additive $k$-tuple is a tuple $(x_1,\ldots, x_k)$ satisfying $x_1 - x_2 + \cdots + (-1)^{k+1}x_k = 0$. In this subsection we prove the following theorem. Together with Theorem \ref{t:weakSid}, this completes the proof of Theorem \ref{t:classification}.

\begin{thm}\label{t:localSid}
Let $\Psi$ be be a system of two linear equations which is linearly generic. Then $\Psi$ is not locally Sidorenko.
\end{thm}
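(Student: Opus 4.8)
The plan is to run everything through the linearly generic expansion (\ref{e:lingen-expansion}),
\[
T_\Psi(\alpha+\eps f)=\alpha^t+\alpha\eps^{t-1}\sum_{|S|=t-1}T_{\Psi(S)}(f)+\eps^t T_\Psi(f),
\]
so that to see $\Psi$ is not locally Sidorenko it suffices to fix one $\alpha\in(0,1)$ and produce, for each $\eps>0$, some $n$ and some $f\colon\F_p^n\to[-\min(\alpha,1-\alpha),\min(\alpha,1-\alpha)]$ with $\E_x f(x)=0$ for which $\alpha\eps^{t-1}\sum_{|S|=t-1}T_{\Psi(S)}(f)+\eps^t T_\Psi(f)<0$. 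Whenever some admissible $f$ satisfies $\sum_{|S|=t-1}T_{\Psi(S)}(f)<0$ this is immediate by taking $\eps$ small (and in fact $\Psi$ is then not even weakly locally Sidorenko); as in the proof of Theorem \ref{t:weakSid} this already settles the case $t$ even, so I may assume $t$ is odd.

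If $\Psi$ contains no additive $(t-1)$-tuple, then every $(t-1)$-subsystem $\Psi(S)$ is a codimension one equation which is not an additive tuple, so Proposition \ref{p:generic-linear} or Proposition \ref{p:special} supplies $f_S$ with $\E f_S=0$, $T_{\Psi(S)}(f_S)=0$ and $T_\Psi(f_S)\ne 0$; setting $f_0=\bigotimes_{|S|=t-1}f_S$ and using (\ref{e:tensor}) gives $T_{\Psi(S)}(f_0)=0$ for every such $S$ while $T_\Psi(f_0)=\prod_S T_\Psi(f_S)\ne 0$. Replacing $f_0$ by $-f_0$ if needed (legitimate since $t$ is odd) makes $T_\Psi(f_0)<0$, and then $T_\Psi(\alpha+\eps f_0)=\alpha^t+\eps^t T_\Psi(f_0)<\alpha^t$ for every $\eps>0$. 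So the substantive case is that $t$ is odd and $\Psi$ contains an additive $(t-1)$-tuple, indexed say by $S_0$. Here either some admissible $f$ has $\sum_{|S|=t-1}T_{\Psi(S)}(f)<0$, and we conclude as above, or $\sum_{|S|=t-1}T_{\Psi(S)}\ge 0$ on $\{\E f=0\}$ — in which case $\Psi$ may genuinely be weakly locally Sidorenko and the point of the theorem is that the admissible radius cannot be chosen uniformly in $n$.

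In this last case I reduce to producing a single ``seed'' $f_0\colon\F_p^{n_0}\to\C$, real valued, with $\E f_0=0$, $T_{\Psi(S)}(f_0)=0$ for the non-additive $(t-1)$-subsystems, $T_\Psi(f_0)<0$, and the scale invariant inequality $\lVert f_0\rVert_\infty\,T_{\Psi(S_0)}(f_0)<\lvert T_\Psi(f_0)\rvert$. Granting such an $f_0$, for each $\eps$ one takes $f=\lambda f_0^{\otimes m}$ with $m$ odd and $\lambda=\min(\alpha,1-\alpha)\lVert f_0\rVert_\infty^{-m}$; by (\ref{e:tensor}) the non-additive terms stay $0$, $T_{\Psi(S_0)}(f)=\lambda^{t-1}T_{\Psi(S_0)}(f_0)^m$ and $T_\Psi(f)=\lambda^t T_\Psi(f_0)^m$, and the target inequality reduces to $\big(\lVert f_0\rVert_\infty T_{\Psi(S_0)}(f_0)/\lvert T_\Psi(f_0)\rvert\big)^m$ being smaller than a fixed multiple of $\eps$, which holds once $m$ is large. (One could equally perturb a minimiser of $\sum_{|S|=t-1}T_{\Psi(S)}(\cdot)$ on $\{\E f=0\}$, exploiting that this form vanishes to order $\ge 2$ there while $T_\Psi$ need not.)

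Building the seed is the heart of the matter and where I expect the real obstacle. By (\ref{e:T-inversion}), $T_\Psi(f)=\sum_{y_1,y_2}\prod_{i=1}^t\hat f(b_{1i}y_1+b_{2i}y_2)$, each codimension one $T_{\Psi(S)}(f)$ is a one-parameter exponential sum, and the additive one is exactly $\lVert\hat f\rVert_{t-1}^{t-1}$ — precisely what blocks the argument of Theorem \ref{t:weakSid}. The key observation is that although $\Psi$ has complexity one its Fourier inverse does not: the $t$ dual forms $(y_1,y_2)\mapsto b_{1i}y_1+b_{2i}y_2$ have pairwise distinct slopes by linear genericity, hence form a system of complexity $t-1$, so polynomial phases of degree up to $t-1$ are the natural test functions on the Fourier side. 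Concretely I would take $\hat f_0=\delta\,e_p(P)\mathbf 1_U$, symmetrised so that $f_0$ is real, for a subspace $U\le\F_p^{n_0}$ with $n_0$ large and a carefully chosen $P\colon U\to\F_p$ of degree at most $t-1$, with $\delta$ normalised so $\lVert f_0\rVert_\infty=1$. Three things must be arranged: (i) $\lVert f_0\rVert_\infty=\delta\max_x\lvert\sum_{h\in U}e_p(P(h)+h\cdot x)\rvert$ must stay controlled, and with it $\lVert\hat f_0\rVert_{t-1}$, forcing $P$ to be nondegenerate and calling on Weil/Deligne-type exponential sum bounds; (ii) for each non-additive $j$ the sum $\sum_{c\in U}e_p\big(\sum_{i\ne j}P(a^{(j)}_i c)\big)$ must vanish, a short list of linear conditions on $P$; and (iii) $T_\Psi(f_0)=\delta^t\sum_{y_1,y_2\in U}e_p\big(\sum_i P(b_{1i}y_1+b_{2i}y_2)\big)$ must be large and negative, which forces the polynomial $\sum_i P(b_{1i}y_1+b_{2i}y_2)$ in $(y_1,y_2)$ to be highly \emph{degenerate}. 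The obstacle is exactly the tension between (i) and (iii): nondegeneracy of $P$ is needed to bound $\lVert f_0\rVert_\infty$ and $\lVert\hat f_0\rVert_{t-1}$, yet the exponential sum in (iii) must exceed $\lVert f_0\rVert_\infty\lVert\hat f_0\rVert_{t-1}^{t-1}$, which demands that $\sum_i P(b_{1i}y_1+b_{2i}y_2)$ be far from nondegenerate; it is the complexity $t-1$ structure of the dual system that opens up enough room to choose $P$ meeting all of (i)–(iii), and once $\Psi$ carries an additive $(t-1)$-tuple this higher-order input cannot be dispensed with.
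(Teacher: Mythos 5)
Your overall architecture is the right one and in fact mirrors the paper's: reduce via (\ref{e:lingen-expansion}) to comparing the $(t-1)$-subsystem terms against $T_\Psi(f)$, and build the function in frequency space, exploiting that the dual system $(y_1,y_2)\mapsto b_{1i}y_1+b_{2i}y_2$ is far from complexity one. Your tensor-power amplification of a fixed ``seed'' is a perfectly good substitute for the paper's device of letting $n\to\infty$ (the paper's Proposition \ref{p:quadratic} produces $f$ with $T_\Phi(f)<p^{-n(t-3)/2}$ but $|T_\Psi(f)|\gg_{p,t}p^{-n(t-4)/2}$, so the ratio itself tends to $0$). But the heart of the theorem is precisely the existence of the seed, and that is the one thing you do not prove: you write down an ansatz $\hat f_0=\delta\,e_p(P)\mathbf 1_U$ and then explicitly leave unresolved the tension between (i) (nondegeneracy of $P$, needed to bound $\lVert f_0\rVert_\infty$ and $\lVert \hat f_0\rVert_{t-1}$) and (iii) (degeneracy of $\sum_i P(b_{1i}y_1+b_{2i}y_2)$, needed to make $T_\Psi(f_0)$ large). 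Asserting that the complexity-$(t-1)$ structure ``opens up enough room'' is not an argument, so the proof has a genuine gap exactly where the new idea is required.

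Moreover, the ansatz as stated runs into a real obstruction. Take $P$ with nondegenerate quadratic part $h^TQ_0h$; then $\sum_i P(b_{1i}y_1+b_{2i}y_2)\equiv 0$ forces $\sum_i b_{1i}^2=\sum_i b_{2i}^2=\sum_i b_{1i}b_{2i}=0$, i.e.\ the row space of $M_\Psi$ must be totally isotropic for the standard bilinear form on $\F_p^t$ --- false for general linearly generic systems, and the analogous power-sum constraints for a degree-$(t-1)$ phase are no easier to meet. The paper's construction is designed to evade exactly this: one takes $\hat f$ proportional to the \emph{balanced indicator} $1_A-p^{-1}$ of the quadric $A=\{h:h^Th=0\}$, i.e.\ an average over \emph{all} dilates $e_p(a\,h^Th)$, so the required degeneracy is supplied not by a single phase but by the dimension count that the quadratics $\psi_i^\perp(x)^T\psi_i^\perp(x)$ span a space of dimension at most $3<t$; Lemma \ref{l:gauss-sums} then evaluates $T_{\Psi^\perp}(1_A-p^{-1})$ as $\sum_{S\subset[t]}(-p)^{-t+|S|}p^{-\rank M_S^{(2)}}+O_t(p^{-n/2})$, which is checked to be a nonzero constant, while $\lVert\hat f\rVert_\infty=O(p^{-n/2})$ makes the additive-tuple term smaller by a factor $p^{-n/2}$. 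Two further simplifications you should adopt: your exact-vanishing conditions (ii) are unnecessary, since H\"older gives $|T_{\Psi(S)}(f)|\le\sum_h|\hat f(h)|^{t-1}$ for \emph{every} codimension-one $(t-1)$-subsystem, so a single comparison with the additive-tuple norm handles them all; and this also removes your reliance on Propositions \ref{p:generic-linear} and \ref{p:special} in the odd-$t$, no-additive-tuple branch, which would otherwise import a largeness assumption on $p$ that the theorem does not carry.
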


The main observation underpinning the upcoming construction is that the true complexity of a system of linear forms is not maintained under Fourier inversion. We have not investigated the extent to which higher-order Fourier analysis may be applied in frequency space, but the upcoming construction (and natural generalisations thereof) at least demonstrates one application of the aforementioned observation.

In light of Theorem \ref{t:weakSid} we may assume that $t$ is odd. We note that if $\Psi(S)$ corresponds to an additive $(t-1)$-tuple, then by Fourier inversion, $T_{\Psi(S)}$ is the ($(t-1)$th power of the) $\ell^{t-1}$ norm in frequency space. In particular, if $T_{\Psi(S)}(f)=0$ then $f=0$, so the proof strategy from the previous subsection fails. We will have to settle for a function $f$ for which the ratio $T_{\Psi(S)}(f)/T_\Psi(f)$ is arbitrarily large. The existence of such a function is proven in Proposition \ref{p:quadratic}. First, we need a lemma.

\begin{lemma}\label{l:gauss-sums}
The following facts hold: 
\begin{enumerate}
\item Define the quadratic $q: \F_p^n \to \F_p$ by $q(x) = x^TMx + h^T x$, where $M$ is a matrix of rank $r$ and $h \in \F_p^n$. Then \[|\E_{x\in \F_p^n} e_p(q(x))|\leq p^{-r/2}.\] 
\item Let $\Psi$ be a system of linear forms, each mapping $(\F_p^n)^D$ to $\F_p^n$. Let $A \subset \F_p^n$ be the zero set of the quadratic defined by $x^Tx$. Then 
\[|T_\Psi(1_A) - p^{-k}| \leq p^{-n/2},\]
where $k$ is the dimension of the $\F_p$-span of the functions $\{\psi_i(x)^T\psi_i(x)\}_{i=1}^t$. 
\end{enumerate} 
\end{lemma}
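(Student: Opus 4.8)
The plan is to treat the two parts separately, with part (1) being a standard Gauss sum estimate and part (2) following from part (1) via Fourier inversion.

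For part (1), I would first reduce to the homogeneous case by completing the square: after a linear change of variables $x \mapsto x + c$ (for a suitable $c$, which exists when $h$ lies in the column space of $M + M^T$) one absorbs the linear term up to a constant phase, and if $h$ is not in that column space the sum is exactly zero (summing over the kernel direction kills it), which is consistent with the bound. So assume $q(x) = x^T M x$. Next, since only the symmetric part of $M$ matters for the quadratic form $x^T M x$ (assuming $p$ odd), diagonalise the associated symmetric bilinear form: there is an invertible $P$ with $P^T(M+M^T)P/2$ diagonal with exactly $r$ nonzero diagonal entries $\lambda_1,\dots,\lambda_r$. The change of variables $x = Py$ is a bijection on $\F_p^n$, so $\E_x e_p(x^TMx) = \E_y e_p(\sum_{j=1}^r \lambda_j y_j^2)$, which factorises as $p^{-r}\prod_{j=1}^r \sum_{y_j \in \F_p} e_p(\lambda_j y_j^2)$ times the trivial contribution $p^{-(n-r)}\cdot p^{n-r} = 1$ from the free coordinates. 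Each one-dimensional Gauss sum $\sum_{y\in\F_p}e_p(\lambda y^2)$ has absolute value exactly $\sqrt p$ for $\lambda \neq 0$, so the whole thing has absolute value $p^{-r}\cdot p^{r/2} = p^{-r/2}$, giving the claimed inequality (in fact with equality). For $p=2$ one handles the quadratic form over $\F_2$ with the analogous (slightly different) normal form, or simply notes the paper's ambient prime is assumed odd where needed.

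For part (2), write $1_A = \E_{t\in\F_p} e_p(t\, x^T x) = p^{-1}\sum_{t\in\F_p} e_p(t\,x^Tx)$, using that $x^Tx = 0$ iff the average of $e_p(t\,x^Tx)$ over $t$ equals $1$. Expanding $T_\Psi(1_A)$ by multilinearity gives
\[
T_\Psi(1_A) = p^{-t}\sum_{t_1,\dots,t_t \in \F_p}\ \E_{\vect x}\ e_p\!\left(\sum_{i=1}^t t_i\, \psi_i(\vect x)^T\psi_i(\vect x)\right).
\]
The inner expectation is an exponential sum of a quadratic form in the $nD$ variables $\vect x$; by part (1) it is bounded by $p^{-\rho/2}$ where $\rho$ is the rank of that quadratic form. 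The point is a dichotomy: if the vector $(t_1,\dots,t_t)$ is such that $\sum_i t_i \psi_i(x)^T\psi_i(x) \equiv 0$ identically (as a quadratic form), the inner expectation is $1$; the set of such $(t_i)$ is a subspace of dimension $t-k$ by the definition of $k$, contributing $p^{-t}\cdot p^{t-k} = p^{-k}$ to the main term. For every other $(t_1,\dots,t_t)$, the quadratic form $\sum_i t_i\psi_i(x)^T\psi_i(x)$ in the variables $\vect x \in (\F_p^n)^D$ is nonzero; here I would argue it has rank $\gg n$ — concretely at least $n$ — because each $\psi_i(x)^T\psi_i(x)$ is, after an $n$-fold "diagonal" tensor structure (the same linear form applied coordinatewise in each of the $n$ copies of $\F_p$), a sum of $n$ independent blocks, so a nonzero $\F_p$-combination is a direct sum of $n$ copies of a single nonzero quadratic form on $\F_p^D$ and hence has rank at least $n$. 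Thus each such term contributes at most $p^{-\rho/2} \le p^{-n/2}$ in absolute value, and there are at most $p^t$ of them, but more carefully the sum over the non-identically-zero locus has absolute value at most $p^{-t}\cdot p^{t}\cdot p^{-n/2} = p^{-n/2}$ (one absorbs the count into a cleaner bound, or notes the relevant count is $\le p^t - p^{t-k} < p^t$). Combining, $|T_\Psi(1_A) - p^{-k}| \le p^{-n/2}$, as required.

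The main obstacle I anticipate is the rank lower bound in part (2): one must verify that any quadratic form $\sum_i t_i\,\psi_i(x)^T\psi_i(x)$ on $(\F_p^n)^D$ which is not the zero form actually has rank growing linearly in $n$ (so that $p^{-\rho/2}$ beats $p^{-n/2}$), rather than merely being nonzero. This is where the tensor/block structure of the setup — that $\psi_i$ acts the same way on each of the $n$ coordinates — is essential: it guarantees the form splits as an orthogonal direct sum of $n$ identical copies of the corresponding form on $\F_p^D$, so "nonzero on $\F_p^D$" upgrades to "rank $\ge n$ on $(\F_p^n)^D$". A secondary technical point is bookkeeping the constant hidden in the definition of $k$ and confirming the "identically zero" locus really is an $\F_p$-subspace of dimension exactly $t - k$, which is immediate from $k$ being defined as the dimension of the span of the $\psi_i(x)^T\psi_i(x)$.
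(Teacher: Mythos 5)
Your proposal is correct, and for part (2) it takes a genuinely different route from the paper. The paper treats the lemma as a citation exercise: part (1) is quoted as a standard Gauss sum estimate (from Green's notes), and part (2) is deduced by choosing $S\subset[t]$ so that $\{\psi_i(x)^T\psi_i(x)\}_{i\in S}$ is a basis of the span, observing that $\prod_{i\in S}1_A(\psi_i(x))=\prod_{i\in[t]}1_A(\psi_i(x))$ pointwise (any dependent form vanishes whenever the basis forms do), hence $T_\Psi(1_A)=T_{\Psi(S)}(1_A)$, and then invoking the linearly-independent case proved inside \cite[Proof of Theorem 3.1]{GW10}. You instead unwind that black box: expanding $1_A$ as an average of additive characters, you split the frequency sum over $(t_1,\ldots,t_t)\in\F_p^t$ into the $(t-k)$-dimensional annihilator subspace (giving the main term $p^{-k}$) and the rest, where the tensor/block structure $A\otimes I_n$ forces rank at least $n$ and part (1) gives the $p^{-n/2}$ error; this is essentially the Green--Wolf argument carried out directly for the full, possibly dependent, system, so it handles the linear dependencies natively rather than via the reduction-to-a-basis trick. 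What you gain is a self-contained proof and an argument that visibly generalises (e.g.\ it makes explicit why the error is uniform in the dependencies); what the paper's route buys is brevity and reuse of an existing result. Two small caveats, neither fatal: in part (1) you implicitly identify $\rank M$ with the rank of its symmetric part $(M+M^T)/2$ — an imprecision already present in the statement, and harmless here since every matrix arising in the application (namely $A\otimes I_n$ with $A=\sum_i t_i c_ic_i^T$) is symmetric; and the whole argument needs $p$ odd, which you correctly flag and which the paper assumes implicitly.
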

\begin{proof}[Sketch proof.]
Part 1 is a very standard Gauss sum estimate; a proof may be found in \cite{Gre07}, for example. In \cite[Proof of Theorem 3.1]{GW10} it is shown (using part 1 as the main ingredient) that if a system of linear forms $\tilde \Psi:= (\tilde \psi_i)_{i=1}^l$ has $\{\tilde \psi_i(x)^T\tilde \psi_i(x)\}_{i\in [l]}$ linearly independent, then $|T_{\tilde \Psi}(1_A) - p^{-l}| \leq p^{-n/2}$. Let $S\subset [t]$ be a set such that $\{\psi_i(x)^T\psi_i(x)\}_{i\in S}$ is a basis for $\{\psi_i(x)^T\psi_i(x)\}_{i=1}^t$. Then $\prod_{i\in S}1_A(\psi_i(x)) = \prod_{i\in [t]}1_A(\psi_i(x))$ for all $x$, so $T_\Psi(1_A) = T_{\Psi(S)}(1_A)$ and our result then follows from the second sentence of this proof. 
\end{proof}

We now briefly introduce some notation. For a $2 \times k$ matrix $M$  with values in $\F_p$, let $M^{(2)}$ be the $3 \times k$ matrix defined by 
\[M^{(2)} := \begin{pmatrix}
\vect{r_1}^2 \\
\vect{r_2}^2 \\
\vect{r_1}\vect{r_2}
\end{pmatrix},\]
where $\vect r_1$ and $\vect r_2$ are the first and second rows of $M$ respectively, and where multiplication of vectors is conducted coordinatewise.

\begin{prop}\label{p:quadratic}
Let $t \geq 5$ be odd. Let $\Psi$ be a system of $t$ linear forms whose image has codimension $2$ and which is linearly generic. Let $\Phi$ be the system defined by an additive $(t-1)$-tuple.  Then there is a function $f:\F_p^n \to [-1,1]$ such that $\E_x f(x) = 0$, such that $|T_\Psi(f)| \gg_{p,t} p^{-n(t-4)/2}$ and such that $T_{\Phi}(f) < p^{-n(t-3)/2}$.  
\end{prop}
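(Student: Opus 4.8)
The plan is to construct $f$ explicitly on the Fourier side as an indicator-like function built from a quadratic phase, exploiting the fact that $\Phi$ is an additive $(t-1)$-tuple and hence, after Fourier inversion, $T_{\Phi}$ computes a higher $\ell^{t-1}$-type norm, which can be made small relative to $T_\Psi$ if we choose $f$ whose Fourier support is spread out. Concretely, I would take $f$ related to the balanced indicator $1_A - p^{-1}$ of the zero set $A$ of the quadratic $x^Tx$ in $\F_p^n$ (or a translate/generalisation thereof, possibly tensored or rescaled), since Lemma \ref{l:gauss-sums}(2) precisely controls $T_\Psi(1_A)$ and $T_{\Phi}(1_A)$ in terms of the dimensions of the spans of $\{\psi_i(x)^T\psi_i(x)\}$ and $\{\phi_i(x)^T\phi_i(x)\}$ respectively. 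The key arithmetic input is the count of these dimensions: I would expect that linear genericity of $\Psi$ forces the quadratic forms $\psi_i(x)^T\psi_i(x)$ — equivalently the rows of $M^{(2)}$ — to span a space of dimension $4$ (that is, $M^{(2)}$ has rank $4$, not $3$; the "extra" dimension over the naive bound reflecting that squaring the two linear equations and taking their product gives three relations, but the cross terms produce a fourth), so that $T_\Psi(1_A) = p^{-4} + O(p^{-n/2})$, while for the additive $(t-1)$-tuple $\Phi$ the span of $\{\phi_i(x)^T\phi_i(x)\}_{i=1}^{t-1}$ has dimension $3$ (the tuple, being an additive tuple, has complexity issues that make its square-forms degenerate), giving $T_{\Phi}(1_A) = p^{-3} + O(p^{-n/2})$. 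These are constants, not decaying, so a direct use of $1_A$ does not immediately give the claimed bounds $|T_\Psi(f)| \gg_{p,t} p^{-n(t-4)/2}$ and $T_{\Phi}(f) < p^{-n(t-3)/2}$; the decaying factors $p^{-n(t-4)/2}$ and $p^{-n(t-3)/2}$ strongly suggest that the right construction is to apply $T$ not to $f = 1_A - p^{-1}$ directly but to exploit the multilinear expansion and isolate subsystems, or to normalise by powers of $n$.

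Reconsidering, the exponents $(t-4)/2$ and $(t-3)/2$ point to a construction where $f$ is a sum of characters on a small Fourier support, and these powers count how many of the $t$ (resp. $t-1$) variables get "paired off" by the defining equation versus left free. So the cleaner approach: build $f$ from its Fourier transform supported on a set $U = \{\pm u_1, \ldots\}$ of frequencies chosen so that the only solutions in $\im M^T$ with all coordinates in $U$ use each frequency, mirroring the argument in Proposition \ref{p:special}, but now with $U$ chosen to come from a quadratic rather than a generic set, so that $\Phi$ — being an additive tuple — kills most but not all of the mass. More precisely, I would parametrise the construction by $n$: choose $\hat f$ supported on roughly $p^{n/2}$ frequencies given by a quadric (the zero set $A$ again, but now interpreted as the Fourier support), weighted by $p^{-n/2}$ each so $\|f\|_\infty \leq 1$. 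Then $T_\Psi(f)$ counts, with weight $p^{-nt/2}$, the number of $(h_1, h_2)$ with all $t$ forms $b_{1i}h_1 + b_{2i}h_2$ landing in the quadric; linear genericity ensures generic $(h_1,h_2)$ work modulo a codimension-$4$ constraint from $M^{(2)}$ having rank $4$, yielding $\sim p^{2n - 4 \cdot n/2} \cdot p^{-nt/2}$... this needs careful bookkeeping but should land on $p^{-n(t-4)/2}$ up to constants. For $\Phi$, the additive-tuple structure means the relevant count over the quadric has a genuine extra cancellation/constraint reducing it by another factor of $p^{n/2}$, giving $p^{-n(t-3)/2}$.

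So the key steps, in order, are: (1) set up $f$ via $\hat f$ supported on (a suitable dilate/section of) the quadric $\{x : x^Tx = c\}$ in $\F_p^n$, with uniform weights chosen so $\|f\|_\infty \le 1$ and $\E f = 0$; (2) write $T_\Psi(f)$ and $T_{\Phi}(f)$ via \eqref{e:T-inversion} as weighted counts of points of $\im M^T$ (resp. the additive-tuple solution space) lying in the product of quadrics, and reduce these counts to estimating $\E_x e_p(q(x))$ for the quadratic forms that arise — here Lemma \ref{l:gauss-sums}(1) is the workhorse; (3) compute the ranks: show $M^{(2)}$ has rank $4$ for linearly generic $\Psi$ (this is the genericity payoff — it says the four quadratic forms $h_1^2, h_2^2, h_1 h_2$ composed appropriately, i.e. the three rows of $M^{(2)}$, together with... actually rank of a $3\times t$ matrix is at most $3$, so I must instead be counting the rank of the $t$ quadratic forms $(b_{1i}h_1 + b_{2i}h_2)^2$ in the variables $h_1^2, h_2^2, h_1 h_2$ — that rank is $\le 3$; the fourth dimension must come from elsewhere, perhaps the linear part, which is where I'd need to look more carefully at the exact form of $f$) and show the corresponding rank for the additive $(t-1)$-tuple is exactly one less; (4) assemble the estimates with explicit tracking of the $p$-powers and the error terms $p^{-n/2}$, checking the error is dominated since $t \geq 5$ means $(t-4)/2 < \ldots$ — wait, for $t = 5$, $(t-4)/2 = 1/2$ and the main term is $p^{-n/2}$, comparable to the error, so I would need $n$ moderately large and the implied constant to save the day, or a slightly sharper version of Lemma \ref{l:gauss-sums} with a better error. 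The main obstacle I anticipate is precisely step (3) combined with this $t = 5$ boundary issue: pinning down the exact dimension count (the "$4$" versus "$3$" dichotomy that produces the gap between $(t-4)/2$ and $(t-3)/2$) and making sure the construction is rigged so that the $\Phi$-count loses exactly one more power of $p^{n/2}$ than the $\Psi$-count while the Gauss-sum error terms remain negligible; this is where the genericity hypothesis and the additive-tuple structure must be used in an essential and quantitatively precise way, and it is the heart of the "higher-order methods in frequency space" novelty the introduction advertises.
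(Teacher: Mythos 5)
Your general direction is the paper's: define $f$ on the Fourier side with $\hat f$ essentially a $p^{-n/2}$-normalised \emph{balanced} indicator of the quadric $A=\{x : x^Tx=0\}$, and control everything with Lemma \ref{l:gauss-sums}. But two of the steps you flag as uncertain are in fact wrong or missing in a way that breaks the argument. First, the mechanism producing the gap between the exponents is not a ``rank $4$ versus rank $3$'' dichotomy --- as you noticed yourself, $M^{(2)}$ is $3\times t$ so its rank is at most $3$, and there is no fourth dimension to be found. The gap comes purely from counting dual free variables: $T_\Phi(f)=\sum_h |\hat f(h)|^{t-1}$ has $p^n$ terms each of size at most $p^{-n(t-1)/2}$, so the bound $T_\Phi(f)<p^{-n(t-3)/2}$ is the \emph{trivial} bound and needs no rank input at all; while $T_\Psi(f)=\sum_{h_1,h_2}\prod_i \hat f(b_{1i}h_1+b_{2i}h_2)$ has $p^{2n}$ terms each of size about $p^{-nt/2}$, and the whole content of the proposition is that this trivial upper bound $p^{-n(t-4)/2}$ is attained up to a constant, i.e.\ that the signed sum does not cancel. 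Your sketch treats this as a count of pairs $(h_1,h_2)$ whose images all land in the quadric (and your bookkeeping ``$p^{2n-4\cdot n/2}\cdot p^{-nt/2}$'' does not even land on $p^{-n(t-4)/2}$), but $\hat f$ is the balanced function $1_A-p^{-1}$ and takes both signs, so cancellation is exactly the danger. The paper handles this by expanding $T_{\Psi^\perp}(1_A-p^{-1})=\sum_{S\subset[t]}(-p)^{|S|-t}T_{\Psi^\perp(S)}(1_A)$, applying Lemma \ref{l:gauss-sums}(2) with $\dim\spa\{\psi_i^\perp(x)^T\psi_i^\perp(x)\}_{i\in S}=\rank M_S^{(2)}$ (linear genericity gives rank $3$ for $|S|\ge 3$, and $1,2$ for $|S|=1,2$), and then verifying that the resulting explicit alternating sum, with numerator $(1-p)^t+p^3(\binom t2-t+1)-(p^2\binom t2-pt+1)$, is nonzero. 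That nonvanishing verification is the heart of the lower bound and is absent from your proposal.

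Second, your justification of $\|f\|_\infty\le 1$ does not work as stated: the quadric has about $p^{n-1}$ points, not $p^{n/2}$, so ``support size times weight'' gives $\sum_h|\hat f(h)|\approx p^{n/2-1}$, far too large for the triangle inequality. Boundedness genuinely requires the balanced indicator together with square-root cancellation from Lemma \ref{l:gauss-sums}(1) applied to $\sum_h(1_A(h)-p^{-1})e_p(x\cdot h)$, a separate check at $x=0$ via part (2), and the symmetry $A=-A$ (plus zeroing $\hat f(0)$ to get $\E f=0$) for real-valuedness. Finally, your $t=5$ boundary worry dissolves once the bookkeeping is corrected: the $O_t(p^{-n/2})$ error is compared with the \emph{constant} $|T_{\Psi^\perp}(1_A-p^{-1})|\gg_{p,t}1$, not with the decaying quantity $p^{-n(t-4)/2}$, so taking $n$ large suffices uniformly for all odd $t\ge 5$.
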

\begin{proof}
We construct $f$ by specifying its Fourier transform $\hat f: \F_p^n \to \C$. Let $A$ be the zero set of the quadratic form given by $x^Tx$.   
Define $\hat f(0) =0$ and $\hat f(h) = \frac{1}{p^{n/2}+1}(1_A(h)-p^{-1})$ otherwise. Firstly, $T_{\Phi}(f) = \sum_h |\hat f(h)|^{t-1} < p^n \cdot \left(p^{-n/2}\right)^{t-1} = p^{-n(t-3)/2}$.

It is clear that for all $h \in \F_p^n$ we have $\hat f(h) = \overline{\hat f(-h)}$, and so $f$ takes values in $\R$. Furthermore, for all $x\ne 0$:
\begin{align*}
|f(x)| = \left|\sum_h \hat f(h)e_p(x \cdot h)\right| &= \frac{1}{p^{n/2}+1}\left|\sum_h (1_A(h) - p^{-1})e_p(x \cdot h) - (1-p^{-1})\right| \\
&\leq  \frac{1}{p^{n/2}+1}\left(\left|p^n \E_h \left(\E_{a\in \F_p} e_p(ah^Th)\right) e_p(x^T h)\right| + (1-p^{-1})\right)\\
&\leq \frac{1}{p^{n/2}+1}\left(p^n\E_{a \in \F_p}\left| \E_h  e_p(ah^Th+x^T h)\right| + (1-p^{-1})\right)\\
&\leq \frac{1}{p^{n/2}+1}|p^{n/2} + 1-p^{-1}| \\
&< 1,
\end{align*}
by Fourier inverting in the first line, orthogonality of characters on $\F_p$ in the second line, and Lemma \ref{l:gauss-sums} part 1 in the fourth. For $x=0$ we have \[ f(0) = \frac{1}{p^{n/2}+1}\left( \sum_h (1_A(h) - p^{-1}) - 1+p^{-1}\right)  \leq 1,\]
by Lemma \ref{l:gauss-sums} part 2 with the trivial set of linear forms $x \mapsto (x)$. Thus $f$ takes values in $[-1,1]$.

By (\ref{e:T-inversion}) and the condition that the image of $\Psi$ has codimension 2 we have 
\[|T_\Psi(f)| = p^{2n}|T_{\Psi^\perp}(\hat f)| \gg_{p,t} p^{2n}\cdot p^{-tn/2}\left|T_{\Psi^\perp}(1_A-p^{-1}) + O_t(p^{-n/2})\right|,\]
where $\Psi^\perp$ denotes the dual set of linear forms which may be determined as in (\ref{e:T-inversion}).
It remains to show that $|T_{\Psi^\perp}(1_A-p^{-1})| \gg_{p,t} 1$. We have 
\begin{align*}
T_{\Psi^\perp}(1_A-p^{-1})  &= \sum_{S \subset [t]}(-p)^{-t+|S|}T_{\Psi^\perp(S)}(1_A)\\
&=\sum_{S\subset [t]}(-p)^{-t+|S|}\E_{x_1,x_2} \prod_{i\in S}1_0\left(\psi_i^\perp(x_1,x_2)^T\psi_i^\perp(x_1,x_2)\right).
\end{align*} 

Since the image of $\Psi$ has codimension 2, we have that each $\psi_i^\perp \in \Psi^\perp$ maps $(\F_p^n)^2 \to \F_p^n$. Thus each $\psi_i^\perp(x)^T\psi_i^\perp(x)$ maps $x=(x_1,x_2) \in (\F_p^n)^2$ to a linear combination of $\{x_1^Tx_1, x_1^Tx_2, x_2^Tx_2\}$, so the dimension of the span of the functions $\{\psi_i^\perp(x)^T\psi_i^\perp(x)\}_{i=1}^t$ is at most $3$. By Lemma \ref{l:gauss-sums}, 
\[ T_{\Psi^\perp}(1_A-p^{-1})  = \sum_{S \subset [t]}(-p)^{-t+|S|}p^{-\dim \spa \{\psi_i^\perp(x)^T\psi_i^\perp(x)\}_{i\in S}} + O_t(p^{-n/2}).\]
One observes that $\dim \spa \{\psi_i^\perp(x)^T\psi_i^\perp(x)\}_{i\in S} = \rank M^{(2)}_S$ where $M_S$ is the matrix obtained by restricting the columns of $M_\Psi$ to the coordinates in $S$. We show that $|T_\Psi(1_A-p^{-1})| \gg_{p,t} 1$ by showing that $\sum_{S \subset [t]}(-p)^{-t+|S|}p^{-\rank M^{(2)}_S}$ is nonzero. Using the fact that every $2\times 2$ minor of $M_{\Psi}$ has nonzero determinant, an easy exercise in linear algebra shows that for each $S \subset [t]$ with $|S| \geq 3$, $\rank M_S^{(2)} = 3$. Furthermore it is clear that if $|S|=1$, $2$ then $\rank M_S^{(2)} = 1$, $2$ respectively. Thus we have 
\begin{align*}
\sum_{S \subset [t]}(-p)^{-t+|S|}p^{-\rank M^{(2)}_S} &= p^{-3}\sum_{|S| \geq 3}(-p)^{-t+|S|} + (\binom{t}{2} - t + 1)(-p)^{-t} \\
&= \frac{(1-p)^{t} + p^3(\binom{t}{2} - t + 1) - (p^2\binom{t}{2} - pt + 1)}{(-p)^{-t}p^{3}}.
\end{align*}
We claim that the numerator is nonzero for pairs $(p,t)$ with $p$ prime and $t\geq 5$. This may be checked by direct computation for small values of $(p,t)$, and for larger values of $(p,t)$ it is clear that the term $(1-p)^t$ dominates. This completes the proof. 
\end{proof}

\begin{remark}
In the above proposition, the assumption that $\Psi$ is linearly generic may be removed quite easily, at least if one allows $p$ to be sufficiently large in terms of $t$. Indeed one writes $\sum_{S \subset [t]}(-p)^{-t+|S|}p^{-\rank M^{(2)}_S}$ as a $p$-adic series $\sum_{i=-t}^{-1} a_ip^i$. Each $|a_i|$ may be bounded in terms of $t$, so if $p$ is large enough in terms of $t$ then this series representation is unique and so the sum is zero if and only if each $a_i$ is zero. One may use some linear algebra to show that at least one of the $a_i$ must be nonzero. It is clear how the more general result may be used in the further classification of Sidorenko systems, but we have no need for such a statement in this document and so we do not write out the details.
\end{remark}

\begin{proof}[Proof of Theorem \ref{t:localSid}]
Recall that we may assume that $t$ is odd. As in the proof of Theorem \ref{t:weakSid}, using (\ref{e:lingen-expansion}), we have 
\[T_\Psi(\alpha + \eps f) = \alpha^t + \alpha \eps^{t-1}\sum_{|S|=t-1} T_{\Psi(S)}(f) + \eps^t T_\Psi(f).\]
As we have seen already, each $\Psi(S)$ has codimension one and so $T_{\Psi(S)}(f)$ is of the form $\sum_h \prod_{i=1}^t \hat f(a_i h)$. By H\"older's inequality then 
\[ |T_{\Psi(S)}(f)| \leq \sum_{h} |\hat f(h)|^{t-1} = T_\Phi(f),\] where we let $\Phi$ denote an additive $(t-1)$-tuple. Thus 
\[T_\Psi(\alpha + \eps f) \leq \alpha^t + (t-1)\alpha \eps^{t-1} T_{\Phi}(f) + \eps^t T_\Psi(f).\]
Let $f$ be the function from Proposition \ref{p:quadratic}, and if $T_\Psi(f) > 0$ then replace $f$ with $-f$, so 
\[T_\Psi(\alpha + \eps f) \leq \alpha^t + (t-1)\alpha \eps^{t-1} p^{-n(t-3)/2} - \eps^t c_{p,t} p^{-n(t-4)/2}.\] For $\eps, \alpha$ fixed, we may choose $n$ large enough so that $T_\Psi(\alpha + \eps f) < \alpha^t$, completing the proof.
\end{proof} 

This also completes the proof of Theorem \ref{t:classification}, which is a union of Theorem \ref{t:weakSid} and Theorem \ref{t:localSid}.

\begin{maincor-repeat}
Conjecture \ref{c:KLM5sid} is true and the answer to Question \ref{q:KLMkSid} is no. 
\end{maincor-repeat}
\begin{proof}
Recall that $s(\Psi)$ is the size of the smallest subsystem of $\Psi$ whose image does not contain a coordinate hyperplane. An easy exercise in linear algebra shows that $s(\Psi) = t-1$ implies that $\Psi$ is linearly generic. 
\end{proof}

\section{Examples}\label{s:examples}

\subsection{Locally Sidorenko, but not Sidorenko}\label{ss:local-not-sid}

Recall that a system $\Psi$ comprising $t$ linear forms is locally Sidorenko if for all $\alpha \in [0,1]$ there exists $\eps >0$ such that for all $n\geq 1$ and all $f : \F_p^n \to [-\alpha, 1-\alpha]$ with $\E f = 0$ we have $T_\Psi(\alpha + \eps f) \geq \alpha^t$. We begin by providing the following example of a system which is locally Sidorenko but not Sidorenko.  This proves Proposition \ref{p:local-not-sid}.

\begin{example}\label{ex:sid-not-local}
Let $\Phi$ denote the following system of two linear equations in nine variables: 
\begin{align*}
x_1 - x_2 + x_3 -x_4 &= 0\\
x_5 -x_6 + x_7 - x_8 + x_9 &=0.
\end{align*}
We firstly observe that $\Phi$ is not Sidorenko. Indeed, let $A = \{ 1\}\subset \F_p$. Then $\E_{x \in \F_p}1_A(x) = 1/p$, but $A$ contains no solutions to $\Phi$, so $T_\Phi(1_A)=0$.

We claim that $\Phi$ is locally Sidorenko. Let $\Af$ be shorthand for the first equation in $\Phi$ and $\Ai$ for the second. We observe that  for a  function $f$ on $\F_p^n$ we have $T_\Phi(f) = T_{\Af}(f)T_{\Ai}(f)$. Also, since $\Af$ has codimension one, we get $T_{\Af}(\alpha + f)=\alpha^4 + T_{\Af}(f)$ for functions $f$ with $\E f=0$, and similarly for $\Ai$. Thus, for $f:\F_p^n \to [-\alpha, 1-\alpha]$ with $\E f=0$,
\begin{align*}
T_\Phi(\alpha + \eps f) &= (\alpha^4+T_{\Af}(\eps f))(\alpha^5+T_{\Ai}(\eps f)) \\
&= \alpha^9 + \eps^4\left(\alpha^5 T_{\Af}(f) + \alpha^4\eps T_{\Ai}(f) + \eps^5T_{\Af}(f)T_{\Ai}(f)\right).
\end{align*}
Fourier-inverting, we have that $T_{\Af}(f) = \sum_h |\hat f(h)|^4$ and $T_{\Ai}(f)=\sum_{h}|\hat f(h)|^4 \hat f(h)$. In particular we have that $|T_{\Ai}(f)| \leq T_{\Af}(f) \leq 1$. Thus,
\[ T_\Phi(\alpha + \eps f) \geq \alpha^9 + \eps^4T_{\Af}(f) \left( \alpha^5 - \alpha^4 \eps - \eps^5\right),\]
and so setting $\eps \leq \alpha/2$ we have that $T_\Phi(\alpha + \eps f)\geq \alpha^9$, so $\Phi$ is indeed locally Sidorenko. 
\end{example}

\subsection{Weakly locally Sidorenko, but not locally Sidorenko}\label{ss:weak-not-local}
Recall that a system $\Psi$ comprising $t$ linear forms is weakly locally Sidorenko if for all $n\geq 1$, for all $\alpha \in [0,1]$ and all $f:\F_p^n \to [-\alpha,1-\alpha]$ with $\E f =0$, there exists $\eps_f >0$ such that for all $\eps < \eps_f$ we have $T_\Psi(\alpha + \eps f) \geq \alpha^t$. In this subsection we demonstrate a linearly generic system which is weakly locally Sidorenko, but not locally Sidorenko, proving Corollary \ref{cor:weak-not-local}.  The example is taken from \cite[Example 4.6]{KLMSid}, as is most of the analysis towards showing that it is weakly locally Sidorenko. That it is not locally Sidorenko follows from Theorem \ref{t:classification}. In the following example we assume that $p \not \in \{2,3\}$.

\begin{example}\label{ex:weak-not-local}
Let $\Phi$ be the following system of two equations in five variables: 
\begin{align*}
x_1 - x_2 + x_3 - x_4 &=0\\
x_1 + 2x_2 -x_3 - 2x_5 &=0.
\end{align*}
One sees that $\Phi$ is linearly generic. By Theorem \ref{t:classification}, $\Phi$ is not locally Sidorenko. We claim that $\Phi$ is weakly locally Sidorenko. For $f$ with $\E f=0$ we have
\[ T_\Phi(\alpha + \eps f) = \alpha^5 + \alpha \eps^4 \sum_{|S|=4}T_{\Phi(S)}(f) + \eps^5T_\Phi(f),\]
so it suffices to show that $\sum_{|S|=4}T_{\Phi(S)}(f)>0$ whenever $f \ne 0$. One computes using (\ref{e:T-inversion}) and recalling that $T_{\Phi(S)}(f)$ is real-valued that
\begin{align*}
\sum_{|S|=4}T_{\Phi(S)}(f) &= \sum_{h} |\hat f(h)|^4 + 2|\hat f(h)|^2|\hat f(2h)|^2 +  2\hat f(-h)\hat f(2h)^2 \hat f(-3h) \\
&= \sum_{h} \frac{1}{2}|\hat f(2h)|^4+ \frac{1}{2}|\hat f(3h)|^4+ 2|\hat f(h)|^2|\hat f(2h)|^2 +  2\Re \left(\hat f(-h)\hat f(2h)^2 \hat f(-3h)\right)\\
&\geq \sum_h |\hat f(-2h)\hat f(3 h)|^2 + 2|\hat f(h)|^2|\hat f(2h)|^2 +  2\Re \left(\hat f(-h)\hat f(2h)^2 \hat f(-3h)\right)\\
&= \sum_h \left|\hat f(h)\hat f(-2h) + \overline{\hat f(-2h)\hat f(3h)}\right|^2 +  |\hat f(h)|^2|\hat f(2h)|^2\\
&\geq  \sum_h |\hat f(h)|^2|\hat f(2h)|^2.
\end{align*}
This is positive unless $|\hat f(h)|^2|\hat f(2h)|^2=0$ for all $h$. But in this case  $\sum_{|S|=4}T_{\Phi(S)}(f) = \sum_h |\hat f(h)|^4$, which is positive unless $f$ is zero. This completes the proof of the claim that $f$ is weakly locally Sidorenko.
\end{example}

\subsection{Locally Sidorenko, but no additive tuple}

Next, we demonstrate that the conclusion of Theorem \ref{t:classification} cannot hold if one removes the condition that the equations are linearly generic. In particular, there exist systems which are not linearly generic, which do not contain an additive tuple and which are locally Sidorenko.

 Recall that if $\Phi$ comprises only an additive $(2k)$-tuple, the corresponding functional $T_{\Phi}$ is a norm on the space of functions and in particular is positive definite: $T_{\Phi}(f) = 0 \implies f =0 \implies T_{\Psi}(f)=0$, for any $\Psi$. This is the main obstruction to the proof of Theorem \ref{t:weakSid} going through for systems with an additive $(2k)$-tuple. It transpires that there exists a non-linearly generic $\Psi$ with a subsystem $\Phi$ whose corresponding functional is not positive definite but with the property that $T_{\Phi}(f) = 0 \implies T_{\Psi(S)}(f)=0$ for all subsystems $\Psi(S) \subseteq \Psi$. This motivates the upcoming example.

\begin{example}\label{ex:no-add-tup}
Let $\Phi$ be the following system of two equations in eight variables: 
\begin{align*}
x_1 - x_2 + 2x_3 - 2x_4 + x_5 + x_6 + x_7 + x_8 &=0\\
x_5-x_6+2x_7-2x_8&=0.
\end{align*}
We claim that $\Phi$ is  locally Sidorenko (and it clearly does not contain any additive $(2k)$-tuples). 
Fix $\alpha \in (0,1)$ and let $f: \F_p^n \to [-\alpha, 1-\alpha]$.  Invoking  (\ref{e:T-inversion}) and with some computation one finds that
\begin{align*}
T_\Phi(\alpha + \eps f) = \alpha^8 &+ \alpha^4\eps^4 \sum_h |\hat f(h)|^2|\hat f(2h)|^2 \\ 
&+ \alpha\eps^7 \sum_h |\hat f(h)|^2|\hat f(2h)|^2 \hat f(-2h)\hat f(h)\hat f(-3h)\\
&+ \alpha \eps^7 \sum_h |\hat f(h)|^2|\hat f(2h)|^2 \hat f(2h)\hat f(3h)\hat f(-h)\\
&+ \alpha \eps^7 \sum_h |\hat f(h)|^2|\hat f(2h)|^2 \hat f(-h)\hat f(-3h)\hat f(-4h)\\
&+ \alpha \eps^7 \sum_h |\hat f(h)|^2|\hat f(2h)|^2 \hat f(3h)\hat f(h)\hat f(4h)\\
&+ \eps^8 \sum_{h_1,h_2} |\hat f(h_1)|^2|\hat f(2h_1)|^2 \hat f(h_1+h_2)\hat f(h_1-h_2)\hat f(h_1+2h_2)\hat f(h_1-2h_2).
\end{align*}
The absolute value of the final sum is at most 
\[ \sum_{h_1} |\hat f(h_1)|^2|\hat f(2h_1)|^2 \sum_{h_2}\left|\hat f(h_1+h_2)\hat f(h_1-h_2)\hat f(h_1+2h_2)\hat f(h_1-2h_2)\right|\] \[\qquad \qquad \qquad \qquad \qquad \qquad \qquad \qquad \leq \sum_{h_1} |\hat f(h_1)|^2|\hat f(2h_1)|^2 \sum_{h_2}\left|\hat f(h_1+h_2)\hat f(h_1-h_2)\right|,\]
by the bound $||\hat f||_\infty \leq 1$. Applying Cauchy--Schwarz to the sum over $h_2$ and subsequently Parseval's identity yields an upper bound of $\sum_{h}|f(h)|^2|f(2h)|^2$. Thus
\[ T_\Phi(\alpha + \eps f) \geq \alpha^8 + \eps^4 \left(\sum_h |\hat f(h)|^2 |\hat f(2h)|^2\right)\left(\alpha^4 -  4\alpha \eps^3 - \eps^4\right).\]
Thus setting $\eps \leq \alpha/2$, we have that $T_\Phi(\alpha + \eps f)\geq \alpha^8$, and so $\Phi$ is locally Sidorenko.
\end{example}

We note that the previous example is not Sidorenko, so gives another, albeit less simple, example verifying Proposition \ref{p:local-not-sid}.

\section{Localising to subsystems with fewest variables}\label{s:local-4ap}

In this section we observe how to `localise' to subsystems with the fewest variables. As an application we outline a relatively short proof of the recent result of Kam\v cev--Liebenau--Morrison \cite{KLMCom} and independently Versteegen \cite{V4ap} that any system containing a four term arithmetic progression (or indeed any system of two independent equations in four variables) is uncommon. Our proof does not use any quadratic methods, which is also new here. 

Previously we have localised by considering small perturbations of a constant function $\alpha + \eps f$, whereupon, defining $s$ to be the  size of smallest subsystem of $\Psi$ for which some $T_{\Psi(S)}(f)$ is nonzero, we have 
\[T_\Psi(\alpha + \eps f) \approx \alpha^t + \eps^{s} \sum_{|S| = s}T_{\Psi(S)}(f).\]

For fixed positive integers $n$, $k$ and $f:\F_p^n \to [-1,1]$, define the function $f^{(k)}:\F_p^{n+k}\to [-1,1]$ by $f^{(k)} := f\otimes 1_0$ where $1_0$ is the characteristic function for $0\in \F_p^k$ (that is, $f^{(k)}=(f\circ \pi_1)(1_0\circ \pi_2)$, where $\pi_1$ projects onto the first $n$ coordinates, $\pi_2$ projects onto the final $k$ coordinates). Then for a system $\Psi=(\psi_1,\ldots,\psi_t)$ in $D$ free variables, we have recalling (\ref{e:tensor})  that
\[ T_\Psi(f^{(k)}) = T_\Psi(f)T_\Psi(1_0) = p^{-kD} T_\Psi(f).\]
Thus letting $k$ be large we have that 
\[ T_\Psi(\alpha + f\otimes 1_0) \approx \alpha^t + p^{-kd}\sum_{D(\Psi(S)) = d}T_{\Psi(S)}(f),\]
where $D(\Psi(S))$ denotes the number of variables in $\Psi(S)$ and $d$ is the minimal number of variables in a system for which $T_{\Psi(S)}(f)$ is nonzero. 

Now we outline an alternative proof of the fact that any system containing a $4$AP is uncommon. In fact we will prove the stronger result (as \cite{KLMCom} did) that any system containing a rank two system  with four linear forms is uncommon. We note that some parts of the argument remain the same. In particular we need the result that rank 2 systems in 4 variables can have $T_\Psi(f)< 0$. Rather than reproving it we just quote the following weak version of \cite[Lemma 4.1]{KLMCom} whose proof we note does not use higher-order methods. 

\begin{lemma}\cite[Lemma 4.1]{KLMCom}\label{l:klm41}
Let $(\Psi_i)_{i=1}^k$ be a collection of linear systems each in four variables with codimension two. Suppose that $s(\Psi_i)=3$ for all $i$.\footnote{Recall that this means that every subsystem of $\Psi_i$ containing two linear forms has image containing a coordinate hyperplane.} Then there exists $f: \F_p^n \to [-\frac{1}{2}, \frac{1}{2}]$ with $\E f =0$ such that $T_{\Psi_i}(f) < 0$ for $i=1,\ldots, k$. 
\end{lemma}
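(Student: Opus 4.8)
\textbf{Proof proposal for Lemma \ref{l:klm41}.}

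The plan is to construct a single function $f$ that works simultaneously for all $k$ systems by a tensoring argument, after first handling a single system. So the main task is: given one system $\Psi$ in four variables with codimension two and $s(\Psi)=3$, produce $g:\F_p^m\to[-\tfrac12,\tfrac12]$ with $\E g = 0$ and $T_\Psi(g)<0$. Once this is done, if $g_i$ works for $\Psi_i$ (living on $\F_p^{m_i}$), then I would take $f = \bigotimes_{i=1}^k g_i$; by Observation \ref{o:tensor-mult}, $T_{\Psi_i}(f) = T_{\Psi_i}(g_i)\prod_{j\ne i}T_{\Psi_i}(g_j)$, so I would need to know $T_{\Psi_i}(g_j)>0$ for $j\ne i$ to preserve the sign. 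This is not automatic, so instead I would tensor each $g_i$ with the characteristic function $1_0$ on a large auxiliary space (the $f\mapsto f^{(k)}$ trick from the start of Section \ref{s:local-4ap}): if $h_i := g_i \otimes 1_0^{(N)}$ with $N$ large, then $T_{\Psi_i}(h_i) = p^{-4N}T_{\Psi_i}(g_i)$ has the same sign as $T_{\Psi_i}(g_i)$, while $T_{\Psi_i}(h_j)$ for $j\ne i$ is dominated by a positive main term $p^{-4N}\cdot(\text{density})^{4}$ coming from the constant part — more cleanly, I would use the perturbative expansion and choose the mixing parameters so that the dominant contribution to each $T_{\Psi_i}$ is the term involving only $g_i$. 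Concretely: set $f = \alpha_0 + \sum_i \eps_i\, g_i^{\text{(embedded on disjoint coordinate blocks)}}$ with $\eps_i$ chosen in a hierarchy $\eps_1 \gg \eps_2 \gg \cdots$; then expanding $T_{\Psi_i}(f)$ via multilinearity, since $s(\Psi_i)=3$ the lowest-order surviving term in $\eps_i$ is $\eps_i^{3}\sum_{|S|=3}T_{\Psi_i(S)}(g_i)$, and one arranges for this to dominate. Ensuring $\E f = 0$ just requires $\alpha_0=0$ (each $g_i$ has mean zero), and the range $[-\tfrac12,\tfrac12]$ is obtained by scaling the $\eps_i$ down.

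For the single-system step, I would work on the Fourier side using \eqref{e:T-inversion}. Write $V = \im M_\Psi^T \le \F_p^4$, a 2-dimensional subspace. The condition $s(\Psi)=3$ (equivalently, linear genericity: every $2\times 2$ minor of the $2\times 4$ matrix $M_\Psi$ is nonsingular) guarantees that for any $(h_1,h_2)$ spanning a 2-dimensional subspace, the vector $M_\Psi^T(h_1,h_2)^T=(u_1,u_2,u_3,u_4)$ has all coordinates nonzero and no two proportional up to sign (cf. the argument in Proposition \ref{p:special}). I would pick one such tuple $(u_1,\ldots,u_4)$ and define $\hat g$ to be supported on $\{\pm u_1,\ldots,\pm u_4\}$ (which has eight distinct nonzero elements), with $\hat g(u_i) = c_i$, $\hat g(-u_i)=\overline{c_i}$ for complex numbers $c_i$ of small modulus to be chosen, and $\hat g(0)=0$. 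Then $g$ is real, mean zero, and $\|g\|_\infty$ is small. Now $T_\Psi(g) = \sum_{v\in V}\prod_{i=1}^4\hat g(v_i)$; by construction the only way all four factors are nonzero with $v\in V$ is $v = \pm(u_1,u_2,u_3,u_4)$ (this is exactly the genericity input — no "mixed" tuple lies in $V$), giving $T_\Psi(g) = \prod_i c_i + \prod_i \overline{c_i} = 2\Re\!\big(\prod_i c_i\big)$. Choosing the arguments of the $c_i$ so that $\prod_i c_i$ is a negative real (e.g. $c_i = t\, e^{i\pi/4}$ so $\prod c_i = t^4 e^{i\pi} = -t^4$) makes $T_\Psi(g) = -2t^4 < 0$, and taking $t$ small keeps $g$ in range.

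The main obstacle I anticipate is bookkeeping rather than conceptual: making the tensoring/perturbation step rigorous so that the sign of each $T_{\Psi_i}(f)$ is genuinely controlled by the single designated summand, uniformly over all $i=1,\dots,k$, while simultaneously keeping $f$ bounded in $[-\tfrac12,\tfrac12]$ and of mean zero. This is where the hierarchy of scales $\eps_1\gg\cdots\gg\eps_k$ must be chosen carefully: one fixes $g_i$ first (each on its own coordinate block, so the blocks are independent), then picks $\eps_1$ small enough that the $\eps_1^3$-term beats everything of higher order in $\eps_1$, then $\eps_2\ll\eps_1$, and so on; the genericity $s(\Psi_i)=3$ is what makes the relevant term order $\eps_i^3$ (all $\le 2$-variable subsystems contribute $0$ against a mean-zero $g_i$). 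A secondary subtlety: I should double-check that the cross terms — products mixing $g_i$ with $g_j$ — all involve enough distinct $\eps$'s to be negligible, which follows because any monomial in the expansion touching block $j$ for $j>i$ carries at least one factor of $\eps_j \ll \eps_i$. Apart from this scale-juggling, every ingredient (Fourier inversion \eqref{e:T-inversion}, multilinearity \eqref{e:expansion}, tensoring \eqref{e:tensor}) is already available in the excerpt.
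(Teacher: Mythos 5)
You should first note that the paper does not actually prove this lemma: it is quoted verbatim from \cite{KLMCom} precisely in order not to reprove it, so your proposal has to stand on its own. It does not, and the first genuine gap is in the single-system step. Linear genericity (equivalently $s(\Psi)=3$ here) does \emph{not} imply that $\pm(u_1,u_2,u_3,u_4)$ are the only elements of $V=\im M_\Psi^T$ with all four coordinates in $\{\pm u_1,\ldots,\pm u_4\}$. For example the $4$-AP system, with $M=\begin{pmatrix}1&-2&1&0\\0&1&-2&1\end{pmatrix}$, is linearly generic for $p>3$, yet $V$ is invariant under reversal of the coordinates, so $(u_4,u_3,u_2,u_1)\in V$ for \emph{every} choice of $h_1,h_2$; more generally, whenever $\sigma V=V$ for a nontrivial signed permutation $\sigma$, mixed tuples are unavoidable no matter how you pick $u\in V$. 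Consequently $T_\Psi(g)$ is a sum of several degree-four monomials in the $c_i,\overline{c_i}$, all of the same size, and ``choose the arguments so that $\prod_i c_i$ is a negative real'' no longer controls the sign: some of the extra monomials can be forced nonnegative (e.g.\ $|c_i|^2|c_j|^2$ if a tuple of the shape $(u_i,-u_i,u_j,-u_j)$ lies in $V$). This is exactly the difficulty that Proposition \ref{p:special} has to fight, by choosing $u\in V$ off all the translates $\sigma V$ with $\sigma V\ne V$ and then running a polynomial argument in the $c_i$, and even there extra hypotheses ($p$ large, an arithmetic condition on $t$) are needed; your argument does not address the symmetric case $\sigma V=V$ at all.

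The combination step also fails as written. Take $f=\sum_b\eps_b g_b$ on disjoint blocks with each $g_b$ of mean zero; the multilinear expansion together with $s(\Psi_i)=3$ kills every mixed term, and in fact also the $\eps_i^3$ terms you invoke (a size-$3$ part in block $i$ must be completed by a singleton in another block, whose factor is $\E g=0$), so what survives is exactly $T_{\Psi_i}(f)=\sum_b\eps_b^4\,T_{\Psi_i}(g_b)$. You only control the sign of the diagonal term $T_{\Psi_i}(g_i)$; the off-diagonal quantities $T_{\Psi_i}(g_j)$, $j\ne i$, have unknown sign and size, and no hierarchy of the $\eps_b$ can fix this: making $T_{\Psi_2}(f)<0$ may require $\eps_1\ll\eps_2$ while making $T_{\Psi_1}(f)<0$ requires $\eps_2\ll\eps_1$ (the two constraints $(\eps_1/\eps_2)^4<|T_{\Psi_2}(g_2)|/T_{\Psi_2}(g_1)$ and $(\eps_2/\eps_1)^4<|T_{\Psi_1}(g_1)|/T_{\Psi_1}(g_2)$ can be incompatible). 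The $g_i\otimes 1_0$ variant does not help, since it rescales all terms by the same factor. A correct proof must either arrange the off-diagonal terms to vanish or be nonpositive, or, as in \cite{KLMCom}, construct a single function that handles all $k$ systems simultaneously; that simultaneity is the substantive content of Lemma \ref{l:klm41}, not bookkeeping.
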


\begin{thm}[{\cite[Theorem 1.1]{KLMCom}, generalisation of \cite[Theorem 1.4]{V4ap}}]\label{t:4ap}
Any linear system of distinct, nonzero forms which contains a dimension 4 subsystem of rank 2 is uncommon.
\end{thm}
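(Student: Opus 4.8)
\textbf{Proof proposal for Theorem \ref{t:4ap}.}

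The plan is to reduce the general statement to the case of a single dimension-4, rank-2 subsystem $\Psi_0$ and then apply the ``localise to fewest variables'' trick from the start of this section together with Lemma \ref{l:klm41}. First I would observe that commonness is inherited by supersystems on disjoint extra variables in an appropriate sense: if $\Psi$ contains a dimension-4 rank-2 subsystem $\Psi_0$ in variables $x_{i_1},\dots,x_{i_4}$, then by specialising all remaining forms of $\Psi$ (there are $t-4$ of them) to tensor with the indicator $1_0$ on fresh coordinates, the quantity $T_\Psi(f^{(k)})$ factors, modulo constants, so it suffices to prove that $\Psi_0$ itself is uncommon. Here one must be slightly careful, since $\Psi_0$ need not have $s(\Psi_0)=3$; but if some $2$-form subsystem of $\Psi_0$ does \emph{not} contain a coordinate hyperplane then $\Psi_0$ is essentially two independent equations sharing few variables, and in the extreme it degenerates to a pair of codimension-one equations whose commonness is already understood (and fails in the relevant cases) --- so the genuinely new case is exactly $s(\Psi_0)=3$, which is what Lemma \ref{l:klm41} addresses.

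Next I would set up the perturbative computation around the balanced colouring. Writing the colouring as $\tfrac12 + \eps g$ with $\E g = 0$, we have by multilinearity (as in \eqref{e:expansion})
\begin{align*}
T_{\Psi_0}(\tfrac12 + \eps g) + T_{\Psi_0}(\tfrac12 - \eps g) &= 2\sum_{S\subset[4],\ |S|\ \mathrm{even}} (\tfrac12)^{4-|S|}\eps^{|S|} T_{\Psi_0(S)}(g) \\
&= 2^{-3} + 2\eps^2 \sum_{|S|=2} (\tfrac12)^2 T_{\Psi_0(S)}(g) + 2\eps^4 T_{\Psi_0}(g).
\end{align*}
Because $s(\Psi_0)=3$, every two-form subsystem $\Psi_0(S)$ has image containing a coordinate hyperplane, so $T_{\Psi_0(S)}(g)=0$ for all $|S|=2$ when $\E g = 0$. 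Hence the expansion collapses to $2^{-3} + 2\eps^4 T_{\Psi_0}(g)$, and so it suffices to exhibit a single $g$ with $\E g = 0$, values in $[-\tfrac12,\tfrac12]$, and $T_{\Psi_0}(g) < 0$: then for all small $\eps$ the monochromatic count drops below $2^{1-t}\cdot(\text{const})$, contradicting commonness. Lemma \ref{l:klm41} (applied with $k=1$, $\Psi_1 = \Psi_0$) supplies exactly such a $g$.

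I would then assemble these pieces: tensor the $g$ from Lemma \ref{l:klm41} with $1_0$ on the $t-4$ ``extra'' directions so that it becomes a function on $\F_p^{n+k}$, verify via \eqref{e:tensor} that $T_\Psi$ of the perturbation still has the form $(\text{const}) + (\text{positive const})\,\eps^4 T_{\Psi_0}(g) + O(\eps^6)$ with the $\eps^4$ coefficient carrying the sign of $T_{\Psi_0}(g)<0$ --- here one needs that all intermediate subsystems $\Psi(S)$ whose variable-count is minimal reduce to $\Psi_0$ or to subsystems of it, which follows from $s(\Psi_0)=3$ and the fact that any form outside $\Psi_0$ is tensored against $1_0$ --- and conclude uncommonness. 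The main obstacle, and the place requiring the most care, is the first paragraph: disentangling which two-form subsystems of the ambient $\Psi$ can fail to contain a coordinate hyperplane, and checking that in every such degenerate configuration one can still localise cleanly (either to $\Psi_0$ with $s(\Psi_0)=3$, or to a smaller already-understood system). Once that case analysis is pinned down, the rest is the routine perturbation bookkeeping above plus a direct invocation of Lemma \ref{l:klm41}.
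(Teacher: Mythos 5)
Your middle computation (the collapse of the expansion around $\tfrac12$ when $s(\Psi_0)=3$, and the invocation of Lemma \ref{l:klm41} to get $T_{\Psi_0}(g)<0$) is fine, but the reduction in your first paragraph is where the proof actually lives, and as stated it does not work. Tensoring with $1_0$ on fresh coordinates is an operation on the \emph{function}, not on individual forms: the perturbation $(\eps f)\otimes 1_0$ is fed into \emph{every} form of $\Psi$, so there is no sense in which you can "specialise the remaining $t-4$ forms" and conclude that it suffices to show $\Psi_0$ alone is uncommon. What the tensoring really buys (and this is how the paper uses it) is that in the multilinear expansion of $T_\Psi(\tfrac12\pm(\eps f)\otimes 1_0)$ each even subsystem $S\subset[t]$ of the \emph{ambient} system is weighted by $p^{-k(|S|+D(\Psi(S)))}$, so as $k\to\infty$ the dominant contribution is the \emph{sum} of $T_{\Psi(S)}(f)$ over all subsystems attaining the minimal value of $|S|+D(\Psi(S))$. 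Those minimal subsystems need not be $\Psi_0$, need not be unique, and need not even have four forms. Two concrete consequences your proposal misses: (i) when the minimum equals $6$ there may be several four-form, codimension-two subsystems at that level, and you must make \emph{all} their $T$-values negative with a single $f$ — this is exactly why Lemma \ref{l:klm41} is stated for a collection $(\Psi_i)_{i=1}^k$, whereas you only apply it with $k=1$; (ii) the minimum can be strictly less than $6$, namely $3$, attained by pairs of proportional (but distinct, nonzero) forms, i.e.\ two-variable equations with $D(\Psi(S))=1$, $|S|=2$. You acknowledge this degenerate possibility but dismiss it as "already understood", which is not an argument: the paper handles it by noting that distinctness of the forms forces the two coefficients of such an equation not to sum to zero, and then runs the Fox--Pham--Zhao random Fourier sampling argument to produce one $f$ making the sum over all these minimal two-variable systems negative.

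So the gap is not bookkeeping but the case analysis itself: you cannot first pass to $\Psi_0$ and then perturb, because uncommonness is not known (and is not proved here) to be inherited from a subsystem in that naive way — indeed establishing such an inheritance for this particular subsystem is precisely the content of the theorem. The correct order is: perturb the balanced colouring of the full system by $(\eps f)\otimes 1_0$ with $\eps=p^{-k}$, identify the subsystems of $\Psi$ minimising $|S|+D(\Psi(S))$ (discarding those with $D(\Psi(S))=|S|$, whose contribution vanishes since $\E f=0$), and then choose $f$: via Lemma \ref{l:klm41} applied to the whole family of minimal four-form systems when the minimum is $6$, and via random Fourier sampling on the two-variable equations when it is $3$.
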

\begin{proof}
Let $t$ be the number linear forms in $\Psi$ and let $f:\F_p^n \to [-\frac{1}{2},\frac{1}{2}]$ have $\E f=0$ and be otherwise unspecified for the meantime. Let $k\geq 0, \eps = p^{-k}$ and $1_0:\F_p^k \to \{0,1\}$ be the characteristic function of zero. Then 
\[ T_\Psi\left(\frac{1}{2} + (\eps f)\otimes 1_0\right) + T_\Psi\left(\frac{1}{2}-(\eps f)\otimes 1_0\right) = \frac{1}{2^{t-1}} + \sum_{\substack{\varnothing \ne S \subset [t] \\ |S| \text{ even}}}\frac{1}{2^{t-|S|-1}}p^{-k(|S|+D(\Psi(S)))}T_{\Psi(S)}(f).\]
Letting $k$ be arbitrarily large, we see it suffices to show that there is a choice of $f$ such that $\sum_{S:D(S)+|S| \text{ minimal}} T_{\Psi(S)}(f) < 0$. Note that this minimum is at most $6$ since $\Psi$ contains a system of four linear forms with codimension two. If $D(\Psi(S)) = |S|$ then we average over a coordinate hyperplane and get $T_{\Psi(S)}(f) = 0$ since $\E f = 0$. Thus we may assume that $D(\Psi(S))<|S|$. 

If the minimum is equal to $6$, then the minimal systems $\Psi(S)$ must have $|S|=4, D(\Psi(S))=2$. If this is the case then we must have that $s(\Psi(S))=3$ for all of these minimal systems, or else they would give rise to a further subsystem $S'$ with $|S'| + D(\Psi(S'))$ smaller. Thus we may conclude by invoking Lemma \ref{l:klm41} in this case. 

Otherwise any $\Psi(S)$ contributing to the minimum must have $D(\Psi(S))=1$, and indeed cannot have $|S|=4$ since any system $\Psi(S)$ with $|S|=4, D=1$ gives rise to $\binom{4}{2}$ subsystems $\Psi(S_i)$ with $D(\Psi(S_i))=1, |S_i|=2$. In the remaining case that the minimal systems have $D(\Psi(S))=1, |S|=2$ we note that these systems are described by a single equation which cannot have its two coefficients summing to zero since this would mean that $\Psi$ has a pair of repeated forms. Thus we may conclude by the Fox--Pham--Zhao random Fourier sampling argument to find $f$ such that $\sum_{S:D(S)+|S| \text{ minimal}} T_{\Psi(S)}(f) < 0$.\footnote{In fact this is a very easy case of the Fox--Pham--Zhao argument since the equations comprise only two variables. It is likely an even simpler argument exists, but this is a known (and nice) argument, so we quote it for brevity.} This completes the proof.
\end{proof}

\begin{remark}
Kam\v cev--Liebenau--Morrison conjecture \cite[Conjecture 6.1]{KLMCom} that an analogue of Lemma \ref{l:klm41} holds for systems comprising an even number of linear forms whose image has codimension two. They note that their proof of \cite[Theorem 1.1]{KLMCom} however does not generalise to the case of higher number of variables, even if \cite[Conjecture 6.1]{KLMCom} is confirmed. We note that our proof above would generalise should \cite[Conjecture 6.1]{KLMCom} be confirmed. This would go some way towards the classification of weakly locally common systems.
\end{remark}

\bibliographystyle{alpha}
\bibliography{sid2}

\begin{thebibliography}{HKKV22}

\bibitem[Alt22]{Alt22}
D.~Altman.
\newblock On a question of {A}lon.
\newblock 2022.
\newblock arXiv:2210.13515.

\bibitem[BR80]{BR80}
S.~A. Burr and V.~Rosta.
\newblock On the {R}amsey multiplicities of graphs---problems and recent
  results.
\newblock {\em J. Graph Theory}, 4(4):347--361, 1980.

\bibitem[CFS10]{CFS10}
D.~Conlon, J.~Fox, and B.~Sudakov.
\newblock An approximate version of {S}idorenko's conjecture.
\newblock {\em Geom. Funct. Anal.}, 20(6):1354--1366, 2010.

\bibitem[CHL22]{CHL22}
E.~Cs\'oka, T.~Hubai, and L.~M. Lov\'asz.
\newblock Locally common graphs.
\newblock {\em Journal of Graph Theory}, 2022.

\bibitem[CKLL18]{CKLL18}
D.~Conlon, J.~H. Kim, C.~Lee, and J.~Lee.
\newblock Some advances on {S}idorenko's conjecture.
\newblock {\em J. Lond. Math. Soc. (2)}, 98(3):593--608, 2018.

\bibitem[CL17]{CL17}
D.~Conlon and J.~Lee.
\newblock Finite reflection groups and graph norms.
\newblock {\em Adv. Math.}, 315:130--165, 2017.

\bibitem[CL21]{CL21}
D.~Conlon and J.~Lee.
\newblock Sidorenko's conjecture for blow-ups.
\newblock {\em Discrete Anal.}, pages Paper No. 2, 13, 2021.

\bibitem[Erd62]{E62}
P.~Erd\H{o}s.
\newblock On the number of complete subgraphs contained in certain graphs.
\newblock {\em Magyar Tud. Akad. Mat. Kutat\'{o} Int. K\"{o}zl.}, 7:459--464,
  1962.

\bibitem[FPZ21]{FPZ21}
J.~Fox, H.T. Pham, and Y.~Zhao.
\newblock Common and {S}idorenko linear equations.
\newblock {\em Q. J. Math.}, 72(4):1223--1234, 2021.

\bibitem[FW17]{FW17}
J.~Fox and F.~Wei.
\newblock On the local approach to {S}idorenko's conjecture.
\newblock {\em Electron. Notes Discret. Math.}, 61:459--465, 2017.

\bibitem[Goo59]{G59}
A.~W. Goodman.
\newblock On sets of acquaintances and strangers at any party.
\newblock {\em Amer. Math. Monthly}, 66:778--783, 1959.

\bibitem[Gow20]{G20}
W.~T. Gowers.
\newblock A uniform set with fewer than expected arithmetic progressions of
  length 4.
\newblock {\em Acta Math. Hungar.}, 161(2):756--767, 2020.

\bibitem[Gre07]{Gre07}
Ben Green.
\newblock Montr\'{e}al notes on quadratic {F}ourier analysis.
\newblock In {\em Additive combinatorics}, volume~43 of {\em CRM Proc. Lecture
  Notes}, pages 69--102. Amer. Math. Soc., Providence, RI, 2007.

\bibitem[GW10]{GW10}
W.~T. Gowers and J.~Wolf.
\newblock The true complexity of a system of linear equations.
\newblock {\em Proc. Lond. Math. Soc. (3)}, 100(1):155--176, 2010.

\bibitem[GW11a]{GW11GAFA}
W.~T. Gowers and J.~Wolf.
\newblock Linear forms and higher-degree uniformity for functions on {$\Bbb
  F^n_p$}.
\newblock {\em Geom. Funct. Anal.}, 21(1):36--69, 2011.

\bibitem[GW11b]{GW11}
W.~T. Gowers and J.~Wolf.
\newblock Linear forms and quadratic uniformity for functions on {$\Bbb
  F^n_p$}.
\newblock {\em Mathematika}, 57(2):215--237, 2011.

\bibitem[Hat10]{H10}
H.~Hatami.
\newblock Graph norms and {S}idorenko's conjecture.
\newblock {\em Israel J. Math.}, 175:125--150, 2010.

\bibitem[HKKV22]{HKKV22}
R.~Hancock, D.~Kr{\' a}l, M.~Krnc, and J.~Volec.
\newblock Toward characterizing locally common graphs.
\newblock {\em Random Structures \& Algorithms}, 2022.

\bibitem[JST96]{JST96}
C.~Jagger, P.~{S}\v{t}ov\'{\i}\v{c}ek, and A.~Thomason.
\newblock Multiplicities of subgraphs.
\newblock {\em Combinatorica}, 16(1):123--141, 1996.

\bibitem[KLL16]{KLL16}
J.~H. Kim, C.~Lee, and J.~Lee.
\newblock Two approaches to {S}idorenko's conjecture.
\newblock {\em Trans. Amer. Math. Soc.}, 368(7):5057--5074, 2016.

\bibitem[KLM21a]{KLMCom}
N.~Kamcev, A.~Liebenau, and N.~Morrison.
\newblock On uncommon systems of equations.
\newblock {\em Israel J. Math., to appear}, 2021.
\newblock arxiv:2106.08986.

\bibitem[KLM21b]{KLMSid}
N.~Kamčev, A.~Liebenau, and N.~Morrison.
\newblock Towards a characterisation of {S}idorenko systems, 2021.
\newblock arxiv:2107.14413.

\bibitem[KLP22]{KLP22}
D.~Kr\'al', A.~Lamaison, and P.~P. Pach.
\newblock Common systems of two equations over the binary field.
\newblock 2022.
\newblock Discrete Math Days, conference proceedings. Extended abstracts
  available at \url{https://dmd2022.unican.es/978-84-19024-03-9.pdf}, pages
  169--173.

\bibitem[Lov11]{L10}
L.~Lov\'{a}sz.
\newblock Subgraph densities in signed graphons and the local
  {S}imonovits-{S}idorenko conjecture.
\newblock {\em Electron. J. Combin.}, 18(1):Paper 127, 21, 2011.

\bibitem[Lov12]{L12}
L.~Lov\'{a}sz.
\newblock {\em Large networks and graph limits}, volume~60 of {\em American
  Mathematical Society Colloquium Publications}.
\newblock American Mathematical Society, Providence, RI, 2012.

\bibitem[LS11]{LS11}
J.~L.~X. Li and B.~Szegedy.
\newblock On the logarithimic calculus and {S}idorenko's conjecture, 2011.

\bibitem[Sid89]{Sid89}
A.~F. Sidorenko.
\newblock Cycles in graphs and functional inequalities.
\newblock {\em Mat. Zametki}, 46(5):72--79, 104, 1989.

\bibitem[Sid91]{S91}
A.~Sidorenko.
\newblock Inequalities for functionals generated by bipartite graphs.
\newblock {\em Diskret. Mat.}, 3(3):50--65, 1991.

\bibitem[Sid93]{S93}
A.~Sidorenko.
\newblock A correlation inequality for bipartite graphs.
\newblock {\em Graphs Combin.}, 9(2):201--204, 1993.

\bibitem[SW17]{SW17}
A.~Saad and J.~Wolf.
\newblock Ramsey multiplicity of linear patterns in certain finite abelian
  groups.
\newblock {\em Q. J. Math.}, 68(1):125--140, 2017.

\bibitem[Sze14]{S14}
B.~Szegedy.
\newblock An information theoretic approach to {S}idorenko's conjecture, 2014.

\bibitem[Tho89]{T89}
A.~Thomason.
\newblock A disproof of a conjecture of {E}rdős in {R}amsey theory.
\newblock {\em J. London Math. Soc. (2)}, 39(2):246--255, 1989.

\bibitem[Ver21]{V4ap}
L.~Versteegen.
\newblock Linear configurations containing 4-term arithmetic progressions are
  uncommon, 2021.
\newblock arxiv:2106.06846.

\end{thebibliography}

\end{document}